\newtheorem{Theorem}{Theorem}[section]
\newtheorem{Remark}[Theorem]{Remark}
\newtheorem{Example}[Theorem]{Example}
\newtheorem{Proposition}[Theorem]{Proposition}
\newtheorem{Lemma}[Theorem]{Lemma}
\newcommand{\R}{\mathbb R}
\newcommand{\Z}{\mathbb{Z}}
\newcommand{\eps}{\varepsilon}
\let\pphi\phi
\let\phi\varphi
\newcommand\blfootnote[1]{%
  \begingroup
  \renewcommand\thefootnote{}\footnote{#1}%
  \addtocounter{footnote}{-1}%
  \endgroup
}
\title{Homogenization of non-local energies\\ on disconnected sets}
\author{Andrea Braides, Sergio Scalabrino, and Chiara Trifone \\
\small SISSA, via Bonomea 265, Trieste, Italy}
\date{}
\begin{document}
\maketitle

\blfootnote{Preprint SISSA  09/2024/MATE}


\def\e{\varepsilon}

\begin{abstract}
    We consider the problem of the homogenization of non-local quadratic energies defined on $\delta$-periodic disconnected sets defined by a double integral, depending on a kernel concentrated at scale $\e$. For kernels with unbounded support we show that we may have three regimes: (i) $\e<\!<\delta$, for which the $\Gamma$-limit even in the strong topology of $L^2$ is $0$; (ii) $\frac\e\delta\to\kappa$, in which the energies are coercive with respect to a convergence of interpolated functions, and the limit is governed by a non-local homogenization formula parameterized by $\kappa$; (iii) $\delta<\!<\e$, for which the $\Gamma$-limit is computed with respect to a coarse-grained convergence and exhibits a separation-of-scales effect; namely, it is the same as the one obtained by formally first letting $\delta\to 0$ (which turns out to be a pointwise weak limit, thanks to an iterated use of Jensen's inequality), and then, noting that the outcome is a nonlocal energy studied by Bourgain, Brezis and Mironescu, letting  $\e\to0$. A slightly more complex description is necessary for case (ii) if the kernel is compactly supported.  

    {\bf Keywords:} homogenization, non-local functionals, perforated domains, separation of scales, Gamma-convergence.

    {\bf MSC Class (2020): 35B27, 74Q05, 49J45, 26A33, 74A70}
\end{abstract}

\section{Introduction}
In this paper we study the asymptotic behaviour of a periodic system of disconnected regions that interact through long-range potentials and can therefore be considered `energetically' connected in the sense of V.V.~Zhikov's {\em $p$-connectedness} \cite{zhikov1996connectedness}. The study of such a type of geometrical objects falls within the analysis of those commonly referred to as `perforated domains', where the domain of integration is a portion of a scaled periodic set $E$ contained in a given region; that is, a set of the form $\Omega\cap \delta E$, with $\delta>0$ a small parameter. The `classical' case is obtained by choosing as energy e.g.~the Dirichlet 
integral, for which the functionals take the form
\begin{equation}
F_\delta(u)=\int_{\Omega\cap \delta E}|\nabla u|^2\,dx.
\end{equation}
If $E$ is a connected open set, then the energies possess a limit, e.g.~in the sense of $\Gamma$-convergence \cite{Attouch,DM,GCB}, as $\delta\to 0$ \cite{BDF} and it is a coercive homogeneous quadratic form on $H^1(\Omega)$. This result can be obtained by using Extension Lemmas, which allow to construct uniformly continuous operators from $H^1(\Omega\cap \delta E)$ to $H^1_{\rm loc}(\Omega)$ \cite{ACPDMP}, and then regard all the functionals as defined in that common space. For less regular sets it is convenient to substitute the topological notion of connectedness with a more analytic one: loosely speaking, if $p>1$, a set $A$ is {\em $p$-connected} if every function $u$ such that $\int_A|\nabla u|^pdx=0$ is a constant \cite{zhikov1996connectedness}. More in general this notion can be given for integrals with respect to a measure, of which the restriction of the Lebesgue measure to $A$ is a particular case. 

Following a seminal result of Bourgain et al.~\cite{bourgainbrezis} (see also \cite{ponce}, and \cite{MR3396427,MR3424902} for applications) the Dirichlet integral on an open set $U\subset \mathbb R^d$ can itself be approximated by introducing another parameter $\e>0$ and considering energies
 \begin{equation}
F^\e(u)=\frac1{\e^{d}}\int_{U\times U}\varphi\big(\tfrac{x-y}\e\big)\frac{|u(x)-u(y)|^2}{|x-y|^2}\,dx\,dy,
\end{equation}
with $\varphi$ a non-negative integrable kernel. 
Such energies have also been systematically studied within a variational theory for convolution-type functionals \cite{alicandro2023variational} in the equivalent form
 \begin{equation}
F^\e(u)=\frac1{\e^{d+2}}\int_{U\times U}\varphi\big(\tfrac{x-y}\e\big)|u(x)-u(y)|^2\,dx\,dy,
\end{equation}
up to using the kernel $\varphi(\xi)|\xi|^2$ in the place of  $\varphi(\xi)$.
We will use this latter form.
Note that if the support of $\varphi$ is  unbounded then any set $U$ is in a sense `$2$-connected' for such functionals, in the sense that if $F^\e(u)=0$ then $u(x)-u(y)=0$ for each $x,y\in U$ so that $u$ is constant on $U$. 

This observation suggests that, contrary to the local case, for non-local energies we might study the behaviour of perforated domains as above even when $E$ is not topologically connected.
In this paper, we consider non-local functionals of the form
\begin{equation}\label{intro-6}
F_{\e,\delta}(u)=\frac1{\e^{d+2}}\int_{\Omega\cap \delta E}\int_{\Omega\cap \delta E}\varphi\big(\tfrac{x-y}\e\big)|u(x)-u(y)|^2\,dx\,dy,
\end{equation}
under the prototypical assumption that 
$$
E= K+\mathbb Z^d
$$
is composed of a periodic array of disconnected sets; that is, $K$ is a (topologically) connected compact set and $(K+k)\cap (K+k')=\emptyset$ if $k,k'\in\mathbb Z^d$ and $k\neq k'$. The behaviour of $F_{\e,\delta}$ will be driven by the mutual behaviour of the two parameters. In the notation, we will tacitly suppose that $\delta=\delta_\e$ is an infinitesimal family as $\e\to 0$. Functionals of the form \eqref{intro-6} have been dealt with in \cite{braides2019homogenization} (see also \cite{alicandro2023variational}) when $E$ is a topologically connected periodic Lipschitz set, using an extension theorem which cannot be applied in our cases. 

\bigskip
The first issue that we examine for such energies is their coerciveness. Since the domain of $F_{\e,\delta}$ is composed of functions defined on a disconnected set, we have to specify the type of convergence with respect to which they are studied. 
In the case of $\varphi$ with support the whole $\mathbb R^d$ we obtain the three cases:

\smallskip
(i) $\e<\!<\delta$. In this case we have a loss of coerciveness, and in particular any function in $L^2(\Omega)$ is a limit in $L^2(\Omega)$ of a sequence of functions with vanishing energy;

\smallskip
(ii) $\e\sim\delta$. In this case we consider the convergence $u_\e\to u$ defined as the $L^2_{\rm loc} (\Omega)$ strong convergence of the piecewise-constant functions $u^\e$ defined by 
\begin{equation}\label{def convergence delta sim eps}
u^\e(x)=\frac1{|K|\delta^{d}}\int_{\delta K+\delta k}u_\e(y)\,dy\hbox{ for } x\in \delta( k+ [0,1)^d).
\end{equation}
The functionals are equicoercive with respect to this convergence, and the limit $u$ belongs to $H^1(\Omega)$;

(iii) $\delta<\!<\e$. In this case the functions $u_\e$ must be `coarse grained', considering
the strong $L^2_{\rm loc} (\Omega)$-convergence of the piecewise-constant interpolations $u^\e$ defined by 
\begin{equation}\label{def convergence delta ll eps}
u^\e(x)=\frac1{|[0,\e]^d\cap \delta E |}\int_{\e( k+ [0,1)^d)\cap \delta E}u_\e(x)\,\hbox{ for } x\in \e( k+ [0,1)^d).
\end{equation}
Also in this case, the limit $u$ is in $H^1(\Omega)$.

\smallskip
If the support of $\varphi$ is bounded then we may have loss of coerciveness also if $\e\sim\delta$. More precisely, if the support of $\varphi$ is the closed ball of radius $s$, then the functionals are equicoercive with respect to the convergence above if and only if $s\e>\delta D$, where $D=\inf\{r: E+B_{r/2} \hbox{ is topologically connected}\}$. In this case again the limit $u$ belongs to $H^1(\Omega)$.
\bigskip

We also compute the $\Gamma$-limit with respect to the convergences above.
If the support of $\varphi$ is the whole $\mathbb R^d$ we have:
\smallskip

(i) ({\em degenerate limit}) if $\e<\!<\delta$ then the $\Gamma$-limit with respect to the strong $L^2(\Omega)$-convergence is identically $0$;

\smallskip
(ii) ({\em homogenization}) if $\frac\e\delta\to \kappa$, then the $\Gamma$-limit is the quadratic form 
$$
F^\kappa(u)=\int_\Omega \langle A^\kappa_{\rm hom} \nabla u, \nabla u\rangle\,dx,
$$
where the symmetric matrix $A^\kappa_{\rm hom}$ satisfies
$$
\langle A^\kappa_{\rm hom}\xi,\xi\rangle=
\min\Bigl\{\int_{[0,1]^d\cap E}\int_{E} \varphi(\tfrac{x-y}\kappa)(\langle\xi, x-y\rangle+ u(x)-u(y))^2\,dx\,dy:  u \hbox{ $1$-periodic}\Big\}.
$$
This result can be seen as following from the results in \cite{braides2019homogenization} (see also \cite{alicandro2023variational}) using the techniques therein combined with the coerciveness result above;

\smallskip
(iii) ({\em separation of scales}) if $\delta<\!<\e$ and $\varphi$ is radially symmetric, then the $\Gamma$-limit is given by
$$
F^\infty(u)=C_\infty\int_\Omega |\nabla u|^2dx,
\ \hbox{  where }\ 
C_\infty= |K|^2\frac1d\int_{\mathbb R^d} \varphi(\xi)|\xi|^2d\xi.
$$
Note that $\frac1d\int_{\mathbb R^d} \varphi(\xi)|\xi|^2d\xi$ is the constant appearing in the $\Gamma$-limit by Bourgain et al.~\cite{bourgainbrezis}, so that this limit can be obtained by first letting $\delta\to 0$, noting that, upon writing
$$
F_{\e,\delta}(u)=\frac1{\e^{d+2}}\int_{\Omega\times \Omega} \chi_{E\times E}(\tfrac{x}\delta,\tfrac{y}\delta)\varphi\big(\tfrac{x-y}\e\big){|u(x)-u(y)|^2}\,dx\,dy,
$$
the corresponding $\Gamma$-limit is simply 
$$
F^\infty_{\e}(u)=\frac1{\e^{d+2}}|K|^2\int_{\Omega\times \Omega} \varphi\big(\tfrac{x-y}\e\big)|u(x)-u(y)|^2\,dx\,dy,
$$
whose successive $\Gamma$-limit as $\e\to 0$ is $F^\infty$. This result is obtained using the Fonseca M\"uller blow-up method combined by a convexity argument for the lower bound. These arguments allow first to reduce to test functions which are $\delta$-periodic perturbations of affine functions, and then to obtain the desired inequality by a double Jensen's inequality. The upper bound is obtained by a direct computation when the target function $u$ is $C^2$. In this case we can take $u_\e=u$, in which case a discretization argument allows to write $F_{\e,\delta}(u)$ as an approximation of a Riemann integral. As a technical remark, we note that it is sufficient to treat the case $\varphi=\chi_{B_r}$ since a general $\varphi$ can be approximated by linear combinations of this type of energies. The lower bound then follows by the superadditivity of the lower limit, while the 
upper bound is proved by the pointwise convergence on $C^2$-functions.

Finally, for $\varphi$ with support the closed ball of radius $s$ the computation in (ii) also holds provided $s\kappa>D$, so that the functionals are equi-coercive.

\section{Notation and statement of the results}\label{section: notation and statement of the results}
We consider a radial convolution kernel in $\mathbb R^d$; that is, a function $\varphi:\mathbb R^d\to [0,+\infty)$ such that a decreasing function $\phi_0:[0,+\infty)\to  [0,+\infty)$ exists satisfying $\varphi(\xi)=\phi_0(|\xi|)$. We further assume that
\begin{equation}\label{crescfi}
\int_{\mathbb R^d} \varphi(\xi)(1+|\xi|^2)dx<+\infty,
\end{equation}
and for each $\e>0$ we define the scaled kernel $\varphi_\e$ by
$$
\varphi_\e(\xi)= \tfrac1{\e^d}\varphi(\tfrac\xi\e).
$$ 
A simple kernel, which will be used as a comparison for general kernels, is 
    $$
    \phi_{\eps}(\xi)=
    \begin{cases}
        \eps^{-d} & \text{if } |\xi| < \eps,\\
        0 & \text{elsewhere},
    \end{cases} 
    $$
    obtained with $\varphi_0=\chi_{[0,1]}$.

\smallskip
We consider a periodic set $E\subset \mathbb R^d$, and we fix a bounded domain $\Omega \subset \R^d$ with Lipschitz boundary.
For all $\e>0$ and $\delta>0$ we define the functional $F_{\delta,\eps}:L^2(\Omega) \rightarrow [0,+\infty) $ by
    \begin{equation}\label{funzionale generale}
    F_{\delta,\eps}(u) = \frac1{\e^2}\int_{(\Omega \cap \delta E) \times (\Omega \cap \delta E)} \phi_{\eps}(x-y)|u(x)-u(y)|^2dxdy.
    \end{equation}
Note that if $E=\mathbb R^d$ then $ F_{\delta,\eps}$ is independent of $\delta$ and the $\Gamma$-limit with respect to the weak and strong convergence in $L^2(\Omega)$ of $F_\e(u)= F_{\delta,\eps}(u)$ has been shown to be equal to
\begin{equation}\label{ci-fi}
C_\varphi \int_\Omega|\nabla u|^2dx, \hbox{ where } C_\varphi:= \frac1d\int_{\mathbb R^d} \varphi(\xi)|\xi|^2dx.     
\end{equation}

%
    We will consider a set $E$ composed of disconnected components; more precisely,
a $1$-periodic set $E$ in $\mathbb R^d$ of the form 
\begin{equation}
E=\sum_{k\in\mathbb Z^d} (k+K),
\end{equation}
where $K$ is the closure of a connected open set with boundary of zero measure, and is such that $(k+K)\cap K=\emptyset$ if $k\in \mathbb Z^d$ and $k\neq 0$.
We also define 
\begin{equation}\label{Di}
D=\inf\{r: \, E+B_{r/2}\hbox{ is (topologically) connected}\}.
\end{equation}

\begin{figure}[ht]
\centering
\begin{tikzpicture}[domain=0:4]
  \draw[step=2,very thin,color=gray] (-2.1,-1.1) grid (4.9,4.9);

  \draw[->] (-1.2,0) -- (5.2,0) node[right] {$x$};
  \draw[->] (0,-1.2) -- (0,5.2) node[above] {$y$};
  \draw[solid,fill=gray, fill opacity = 0.2] (0,0) circle (0.5);
  \draw[solid,fill=gray, fill opacity = 0.2] (2,0) circle (0.5);
  \draw[solid,fill=gray, fill opacity = 0.2] (0,2) circle (0.5);
  \draw[solid,fill=gray, fill opacity = 0.2] (2,2) circle (0.5);
  \draw[->]        (1,-0.7)   -- (1.95,-0.7);
  \draw[->]        (1,-0.7)   -- (0.05,-0.7);
  \node[text width=1cm] at (1.4,-0.5){$\delta$};
  \draw[dashed, ->,line width=0.1mm]      (-1,0.7)   -- (-0.05,0.2); 
  \draw[red]            (0,0) -- (0,0.5);
  \node[text width=1cm] at (-1,0.7){$c\delta$};
  \draw [blue,dashed,domain=-30:120] plot ({2.3*cos(\x)}, {2.3*sin(\x)});
  \draw[blue, dashed, ->] (0,0) -- (1.55,1.55);
  \node[text width=1cm] at (1.1,1.1){$\eps$};
  \node[text width=1cm] at (2.7,2.7){$\delta E$};
  \node[text width=1cm] at (1.4,3.3){$\Omega$};
\end{tikzpicture}
\caption{ a simple example of non-connected domain}\label{Fig1}
\end{figure}
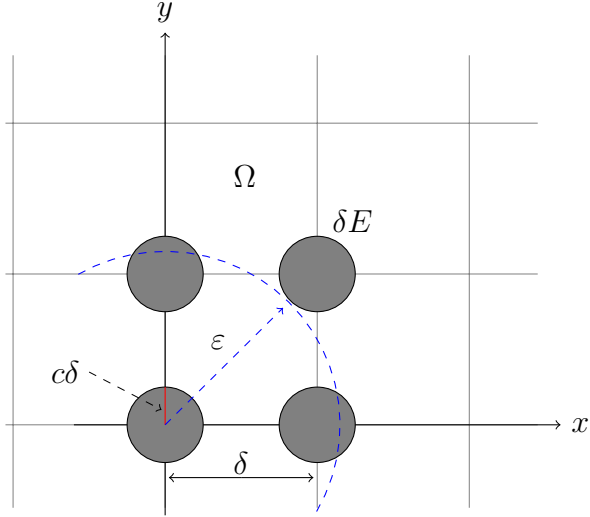

\begin{Example}\label{Ex1}\rm The simplest example of such a geometry is by taking $K=\overline{B_c}$ the closed ball of center $0$ and radius $c<\frac12$ (see Fig~\ref{Fig1}), for which $D=1-2c$. Note that for this choice of $E$ ${\rm dist}(K, K+k)\ge D$ for all $k\in\mathbb R^d$, which is not the case in general. For example, if we take as $K$ the closed ellipse given by the relation $\frac{x_1^2}{c^2_1}+\frac{x_2^2}{c^2_2}\le 1$ with $c_2<c_1$
in $\mathbb R^2$, we have dist$(K, K+e_1)= 1-2c_1$ and  dist$(K, K+e_2)= 1-2c_2=D$.\end{Example}

\subsection{Definition of convergence and coerciveness}

We first consider the cases in which we do not have coerciveness.
The first one is in the regime $\e<\!<\delta$, for which the $\Gamma$-limit, even if computed with respect to the strong $L^2(\Omega)$ convergence is $0$. This is a consequence of the following result.

\begin{Theorem}\label{no-comp-1} Let $\delta=\delta_\e$ be such that $\delta\to 0$ and
$$
\lim_{\e\to 0} \frac{\e}{\delta_\e}=0.
$$
then for all $u\in L^2(\Omega)$ there exists a sequence $u_\e$ converging strongly to $u$ in $L^2(\Omega)$ and such that 
$$
\lim_{\e\to 0}   F_{\delta,\eps}(u_\e)= 0.
$$
\end{Theorem}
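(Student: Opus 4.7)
The plan is a density argument combined with an explicit recovery sequence that exploits the disconnectedness of $E$. Since the $\Gamma$-upper limit with respect to strong $L^2(\Omega)$-convergence is lower semicontinuous and Lipschitz functions are dense in $L^2(\Omega)$, it suffices to produce, for each Lipschitz $u$ on $\overline\Omega$, a sequence $u_\e\to u$ in $L^2$ with $F_{\delta,\e}(u_\e)\to 0$; a standard diagonal extraction then yields the same conclusion for arbitrary $u\in L^2(\Omega)$.

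Given Lipschitz $u$ with constant $L$ (extended, if necessary, by McShane to a neighborhood of $\overline\Omega$), I would define $u_\e(x):=u(\delta k)$ for $x\in A_k=\delta(k+K)\cap\Omega$, and $u_\e(x):=u(x)$ for $x\in\Omega\setminus\delta E$. Since $|u(\delta k)-u(x)|\le L\operatorname{diam}(K)\delta$ on $A_k$, $u_\e\to u$ uniformly on $\overline\Omega$, and hence in $L^2(\Omega)$. The key structural property is that $u_\e$ is constant on every component $A_k$, so same-component pairs contribute zero to $F_{\delta,\e}(u_\e)$, leaving
\[F_{\delta,\e}(u_\e)=\frac{1}{\e^2}\sum_{k\neq k'}\bigl(u(\delta k)-u(\delta k')\bigr)^2\int_{A_k}\int_{A_{k'}}\phi_\e(x-y)\,dx\,dy.\]

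To bound this sum I would use: (i) the Lipschitz inequality $(u(\delta k)-u(\delta k'))^2\le L^2\delta^2|k-k'|^2$; and (ii) for $x\in A_k$, $y\in A_{k'}$ with $k\ne k'$, the geometric bounds $|x-y|\ge\gamma\delta$ and $\delta|k-k'|\le|x-y|+2\operatorname{diam}(K)\delta$, where $\gamma:=\inf_{k\in\mathbb Z^d\setminus\{0\}}\operatorname{dist}(K,K+k)>0$ is positive by compactness and disjointness of the integer translates of $K$. Combining (i) and (ii) gives $(u(\delta k)-u(\delta k'))^2\le CL^2(|x-y|^2+\delta^2)$, so after the change of variables $\xi=(x-y)/\e$ one obtains
\[F_{\delta,\e}(u_\e)\le CL^2|\Omega|\int_{|\xi|\ge\gamma\delta/\e}\varphi(\xi)\bigl(|\xi|^2+(\delta/\e)^2\bigr)\,d\xi.\]
On the range of integration $(\delta/\e)^2\le|\xi|^2/\gamma^2$, so the entire expression is dominated by a multiple of $\int_{|\xi|\ge\gamma\delta/\e}\varphi(\xi)|\xi|^2\,d\xi$, which tends to $0$ as $\delta/\e\to\infty$ by \eqref{crescfi} and dominated convergence.

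The main conceptual obstacle I foresee is recognising that one cannot afford the crude estimate $|u_\e(x)-u_\e(y)|^2\le 4\|u\|_\infty^2$: combined with $\int_{|z|\ge\gamma\delta}\phi_\e(z)\,dz$, that approach yields a bound of order $\delta^{-2}$, which does not vanish as $\delta\to 0$. One genuinely has to propagate the Lipschitz character through the estimate, converting the discrete separation $|k-k'|\delta$ into the continuous separation $|x-y|$ so that the factor $|k-k'|^2$ is absorbed into the kernel's second-moment weight $|\xi|^2$; only then does the truncated tail $\int_{|\xi|\ge R}\varphi(\xi)|\xi|^2\,d\xi$, which vanishes as $R\to\infty$, deliver the required decay.
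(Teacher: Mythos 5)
Your proposal is correct and follows essentially the same route as the paper: replace a Lipschitz $u$ by a function constant on each inclusion $\delta(k+K)$ (the paper uses the averages $u^\delta_k$ rather than the point values $u(\delta k)$, an immaterial difference), observe that only pairs in distinct components contribute and that these satisfy $|x-y|\ge D_0\delta$ with $D_0=\min_{k\neq 0}\operatorname{dist}(K,K+k)>0$, convert the Lipschitz bound into a factor $|x-y|^2$, and conclude after the change of variables $y=x+\e\xi$ that the energy is controlled by the tail $\int_{\{|\xi|>D_0\delta/\e\}}\varphi(\xi)|\xi|^2\,d\xi$, which vanishes by \eqref{crescfi} since $\delta/\e\to+\infty$. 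Your explicit handling of the cross term $\delta|k-k'|\le |x-y|+C\delta$ and the absorption of $(\delta/\e)^2$ into $|\xi|^2/\gamma^2$ just spells out what the paper leaves implicit.
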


 \begin{proof}
It is sufficient to prove that the claim holds for a strongly dense subclass in $L^2(\Omega)$.
We then take a Lipschitz continuous function $u$, and consider the function $u_\e$ equal to the constant
\begin{equation}\label{ue-ue}
u_\e(x)=u^\delta_k=\frac1{\delta^d|K|} \int_{\delta(k+K)} u(y)dy 
\end{equation}
in $\delta(k+K)$, and to $u$ on $\Omega\setminus \delta E$.  We have $u_\e\to u$ in $L^2(\Omega)$, and we can estimate
$$
F_{\delta,\eps}(u_\e)\le C \frac1{\e^{d+2}}\int_{(\Omega\times\Omega)\cap\{|x-y|>D_0\delta\}}\varphi\Big(\frac{|x-y|}\e\Big)|x-y|^2\,dx\,dy,
$$
 where $D_0=\min\{{\rm dist}(K, k+K): k\neq 0\}$. The change of variables $y=x+\e\xi$ gives
 $$
F_{\delta,\eps}(u_\e)\le C' \int_{\{|\xi|>D_0\frac\delta\e\}}\varphi(\xi)|\xi|^2d\xi,
$$
with $C'$ depending only on $\Omega$ and the Lipschitz constant of $u$.
This latter integral tends to $0$ by \eqref{crescfi}, since $\frac\delta\e\to +\infty$.
\end{proof}

 In the case $\delta\sim\e$ we havea lack of coerciveness in the case of kernels with compact support. Up to scaling, we can suppose that the support of $\varphi_0$ is $[0,1]$.
We then have the following result.

\begin{Proposition}\label{nonc-2}Let the support of $\varphi_0$ be $[0,1]$, and let $D$ be defined by \eqref{Di}.

{\rm(a)} If  $\delta =\delta_\e$ is such that $\e<\delta D_0$ then for all $u\in L^2(\Omega)$ there exists a sequence with $F_{\delta,\eps}(u_\e)=0$ and converging to $u$ strongly in $L^2(\Omega)$;

{\rm(b)}
 If $\delta =\delta_\e$ is such that $\e<\delta D$ then there exists a sequence 
 $u_\e$  with $F_{\delta,\eps}(u_\e)=0$ and such that $u_\e\chi_{\delta E}$ does not converge weakly in $L^2(\Omega)$. 
\end{Proposition}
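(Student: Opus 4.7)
Both parts rest on the fact that, with $\phi_0$ supported in $[0,1]$, the rescaled kernel vanishes whenever $|x-y|\ge\e$; consequently $F_{\delta,\e}(u)=0$ for every $u$ that is constant on each equivalence class of $\delta E$ for the relation ``there exists a chain of points of $\delta E$ with consecutive distances $<\e$''. The two statements differ only in the coarseness of this equivalence, a feature controlled by the ratio $\e/\delta$.

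\textbf{Part (a).} I first observe that under $\e<\delta D_0$, whenever $x\in\delta(k+K)$ and $y\in\delta(k'+K)$ with $k\neq k'$, $|x-y|\ge\delta\,\mathrm{dist}(K+k,K+k')\ge\delta D_0>\e$, so that $\phi_\e(x-y)=0$. Thus any function constant on each individual $\delta(k+K)$ has $F_{\delta,\e}=0$. Given $u\in L^2(\Omega)$, I approximate it in $L^2(\Omega)$ by a Lipschitz $v$ and then define $v_\e$ by the cell-average construction \eqref{ue-ue} used in the proof of Theorem~\ref{no-comp-1} (and $v_\e=v$ outside $\delta E$). The same Lipschitz estimate as there gives $v_\e\to v$ in $L^2$, but now $F_{\delta,\e}(v_\e)=0$ \emph{exactly}, since all cross-component interactions are killed by the strict range condition. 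A standard diagonal argument between the Lipschitz approximation of $u$ and the cell-averaging then extends the statement to arbitrary $u\in L^2(\Omega)$.

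\textbf{Part (b).} Set $r=\e/\delta<D$. By \eqref{Di}, $E+B_{r/2}$ is topologically disconnected; pick one connected component $U$ of this set and set $\mathcal E:=U\cap E$, so that $\mathcal E\subsetneq E$ and $\mathrm{dist}(\mathcal E,E\setminus\mathcal E)>r$ (had we $x\in\mathcal E$, $y\in E\setminus\mathcal E$ with $|x-y|\le r$, the segment $[x,y]$ would already lie in $(x+B_{r/2})\cup(y+B_{r/2})\subset E+B_{r/2}$ and connect the two components). Consequently $\mathrm{dist}(\delta\mathcal E,\delta E\setminus\delta\mathcal E)>\e$, and any function which takes one constant value on $\delta\mathcal E$ and another on $\delta E\setminus\delta\mathcal E$ has $F_{\delta,\e}=0$. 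I now produce the desired sequence by \emph{alternating} two zero-energy choices along two disjoint subsequences of $\e\to 0$: on the first take $u_\e=\chi_{\delta E}$, so that $u_\e\chi_{\delta E}=\chi_{\delta E}\rightharpoonup|K|\chi_\Omega$ in $L^2(\Omega)$; on the second take $u_\e=\chi_{\delta\mathcal E}$, so that $u_\e\chi_{\delta E}=\chi_{\delta\mathcal E}\rightharpoonup\rho\,\chi_\Omega$ for the asymptotic volume density $\rho$ of $\mathcal E$ in $\mathbb R^d$. Since the $\mathbb Z^d$-stabiliser of $\mathcal E$ is a \emph{proper} sublattice (otherwise $\mathcal E=E$), one gets $\rho<|K|$; in the geometries of Example~\ref{Ex1}, $\mathcal E$ is either bounded or sits on a lower-dimensional strip and $\rho=0$. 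The two subsequential weak limits disagree, so $u_\e\chi_{\delta E}$ does not converge weakly in $L^2(\Omega)$.

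\textbf{Main obstacle.} Part (a) is essentially a crisper version of the proof of Theorem~\ref{no-comp-1}. The substantive point in part (b) is the geometric observation that the threshold $D$ is precisely the right notion of ``kernel-connectivity'' making it possible for a zero-energy sequence to sit on a strict sub-density of $\delta E$; the verification that $\rho<|K|$ reduces, via periodicity, to counting the number of copies of $K$ per fundamental domain of the stabiliser sublattice of $\mathcal E$, and the strict inequality is immediate from $\mathcal E\subsetneq E$. A minor technicality is that the super-component structure can in principle change with $\e$, but this is handled by first extracting a sub-sequence along which $\e/\delta$ stays in a fixed sub-interval of $(0,D)$ where the components of $E+B_{r/2}$ are topologically stable.
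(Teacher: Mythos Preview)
Your treatment of part (a) is correct and coincides with the paper's: both reduce to the cell-average construction from Theorem~\ref{no-comp-1}, observing that the strict inequality $\e<\delta D_0$ kills every cross-component contribution so that the energy vanishes identically rather than merely tending to zero.

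For part (b) you take a genuinely different route. The paper constructs an \emph{unbounded} sequence: having chosen $\overline k\in\mathbb Z^d$ with $K$ and $K+\overline k$ in distinct super-components, it sets $u_\e\equiv m$ on the component containing $\delta(2m\overline k+K)$, so that as $\delta\to 0$ arbitrarily large values of $m$ occur inside $\Omega$ and $\|u_\e\chi_{\delta E}\|_{L^2}\to\infty$, which immediately excludes weak convergence. You instead produce a \emph{bounded} sequence by alternating between the constant $1$ and $\chi_{\delta\mathcal E}$, and show that the two subsequential weak limits $|K|$ and $\rho<|K|$ disagree. Both arguments prove the literal statement. Two remarks on the comparison. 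First, your ``topological stability'' fix for the $\e$-dependence of $\mathcal E$ is correct but deserves to be spelled out: the component structure of $E+B_{r/2}$ changes only at the finitely many values $r\in\{\mathrm{dist}(K,K+k):k\ne 0\}\cap(0,D)$, so by pigeonhole one stable sub-interval (or one critical value) is visited infinitely often by $r_\e=\e/\delta_\e$; on that subsequence one checks $\mathcal E=\bigcup_{k\in\Lambda}(K+k)$ with $\Lambda=\mathrm{Stab}(U)\subsetneq\mathbb Z^d$, whence $\rho=|K|/[\mathbb Z^d\!:\!\Lambda]<|K|$ (or $\rho=0$ if $\Lambda$ has deficient rank). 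Second, the paper's unbounded construction yields a stronger failure of coercivity, one that cannot be repaired by subtracting constants, whereas your alternating example can (take $c_\e=0$ on the first subsequence and $c_\e=(|K|-\rho)/|K|$ on the second, so that $(u_\e-c_\e)\chi_{\delta E}\rightharpoonup |K|$ along both). This distinction is relevant to the surrounding compactness statements, which are phrased up to additive constants, though it does not affect the proposition as written.
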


\begin{proof}Case (a) is dealt with exactly as in the proof of Theorem \ref{no-comp-1}.
In case (b) we have that the set $\delta E+B_{\e/2}$ is composed of infinitely many disconnected components. We may suppose for simplicity that each connected component is not bounded since otherwise we are in case (a), and let $\overline k\in\mathbb Z^d$ be such that $\delta (\overline k+K)$ does not belong to the same connected component as $\delta K$. We may set $u_\e(x)= m$ if $x$ belongs to the same connected component as $\delta (2m \overline k+K)$, and for example $u_\e(x)=0$ elsewhere. Since $\e<\delta D$, if $\varphi(\frac{|x-y|}\e)>0$ and $x,y\in \delta E$ then they belong to the same connected component. This shows that $F_{\delta,\eps}(u_\e)=0$.
\end{proof}

\begin{Remark}\label{rem-deg}\rm
Claim (a) in Proposition \ref{nonc-2} shows that the $\Gamma$-limit in the strong $L^2$ topology is $0$. In the second case, if $\delta D_0<\e<\delta D$
then the sequence $F_{\delta,\eps}$ retains some form of coerciveness, which gives a non-trivial $\Gamma$-limit in the weak $L^2$ topology. For example, in the case of ellipsoidal sets as in Example \ref{Ex1}, the domain of the $\Gamma$-limit will be functions in $L^2(\Omega)$ whose distributional derivative in the $x_1$-direction is an $L^2$ function. Since this issue is not central in our discussion we omit the details of this case.
\end{Remark}

The following theorems will be proved in Section \ref{compactness}. They involve piecewise-affine interpolations obtained using \emph{Kuhn's decomposition} \cite{kuhn} of cubes of $\R^d$ into $d!$ simplexes which are uniquely determined by a permutation of the indices $\{1,...,d\}$. Since the actual form of the piecewise-affine interpolations is not relevant in our context we refer e.g.~to \cite{solci2023nonlocalinteraction}
for their use in nonlocal interaction problems.

\begin{Theorem}\label{comp1}
Let $\Omega$ be a connected set, let 
$$
\lim_{\e\to0} \frac\e\delta= \kappa,
$$
and suppose either that $\varphi_0$ has support $[0,1]$ and $\kappa>D$, or that the support 
of $\varphi$ be the whole $\mathbb R^d$.  Let $u_\e$ be such that $\sup_\e F_{\delta,\eps}(u_\e)<+\infty$.
Let the functions $\overline u_\e$ be defined 
as the piecewise-affine interpolations of the functions $\delta k\mapsto u^\delta_k$ in  \eqref{def convergence delta sim eps}  if $k\in \mathbb Z^d$ and $\delta (k+K)\subset \Omega$. Then, up to subsequences and addition of constants $\overline u_\e\to u$ in $L^2_{\rm loc}(\Omega)$ for some $u\in H^1(\Omega)$.
\end{Theorem}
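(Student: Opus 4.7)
\smallskip\noindent\textbf{Overall strategy.}
The plan is to extract from the uniform energy bound $\sup_\e F_{\delta,\e}(u_\e)<+\infty$ a discrete $H^1$-estimate on the lattice function $k\mapsto u^\delta_k$, and then invoke the standard fact that Kuhn's piecewise-affine interpolation converts such a discrete bound into a genuine $H^1_{\mathrm{loc}}(\Omega)$-bound on $\overline u_\e$. After subtracting a suitable additive constant, Poincar\'e--Wirtinger combined with Rellich--Kondrachov will then give $L^2_{\mathrm{loc}}$-compactness and an $H^1$-limit.

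\smallskip\noindent\textbf{Discrete gradient estimate.}
For each pair of adjacent indices $k,k'\in\mathbb Z^d$ (i.e.\ $|k-k'|_1=1$) with $\delta(k+K),\delta(k'+K)\subset\Omega$, Jensen's inequality applied to the definition of $u^\delta_k$ in \eqref{def convergence delta sim eps} yields
$$
|u^\delta_k - u^\delta_{k'}|^2 \leq \frac{1}{\delta^{2d}|K|^2}\int_{\delta(k+K)}\int_{\delta(k'+K)}|u_\e(x)-u_\e(y)|^2\,dx\,dy.
$$
Multiplying and dividing the integrand by $\varphi_\e(x-y)$ and exploiting a uniform lower bound of the form $\varphi_\e\geq c_0\e^{-d}$ on the integration domain, this becomes
$$
|u^\delta_k - u^\delta_{k'}|^2 \leq \frac{\e^{d+2}}{c_0\,|K|^2\,\delta^{2d}}\,\mathcal E_{k,k'},
$$
where $\mathcal E_{k,k'}$ denotes the contribution of the product $\delta(k+K)\times\delta(k'+K)$ to $\e^{2}F_{\delta,\e}(u_\e)$. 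Summing over all adjacent pairs, and using $\e/\delta\to\kappa<+\infty$ together with the fact that each cell belongs to $O(1)$ adjacent pairs, I obtain the discrete $H^1$-bound
$$
\delta^{d-2}\sum_{|k-k'|_1=1}|u^\delta_k - u^\delta_{k'}|^2 \leq C\left(\frac\e\delta\right)^{d+2}F_{\delta,\e}(u_\e) \leq C'.
$$
In the full-support case the lower bound on $\varphi_\e$ holds trivially over the whole product $\delta(k+K)\times\delta(k'+K)$: since $|x-y|\leq C_K\delta$ there, monotonicity of $\phi_0$ gives $\phi_0(|x-y|/\e)\geq\phi_0(C_K/\kappa+1)>0$ for $\e$ small.

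\smallskip\noindent\textbf{Conclusion.}
The properties of Kuhn's simplicial interpolation imply that $\int_{\Omega'}|\nabla\overline u_\e|^2\,dx$ is controlled by a constant times the discrete sum above, restricted to cells whose simplices lie inside $\Omega'$, for every $\Omega'\Subset\Omega$. Subtracting from $\overline u_\e$ its mean over a fixed connected subdomain $\Omega_0\Subset\Omega$ (where the connectedness of $\Omega$ is used to make this a meaningful normalisation), Poincar\'e--Wirtinger turns the gradient bound into a uniform $H^1(\Omega_0)$-bound, and Rellich--Kondrachov extracts an $L^2(\Omega_0)$-converging subsequence whose limit is in $H^1(\Omega_0)$. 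A standard diagonal argument along an exhaustion $\Omega_0\Subset\Omega_1\Subset\dots\nearrow\Omega$, adjusting additive constants at each stage, then patches these limits into a single $u$ with $\overline u_\e\to u$ in $L^2_{\mathrm{loc}}(\Omega)$. Lower semicontinuity of the Dirichlet integral on $\Omega'\nearrow\Omega$ gives $\nabla u\in L^2(\Omega)$, and Poincar\'e on the Lipschitz domain $\Omega$ upgrades this to $u\in H^1(\Omega)$ up to a constant.

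\smallskip\noindent\textbf{Main obstacle.}
The delicate case is $\varphi_0=\chi_{[0,1]}$ with $\kappa>D$: the kernel $\varphi_\e$ is then supported on $\{|x-y|<\e\}$, which in general is strictly smaller than the maximal distance between points of $\delta(k+K)$ and $\delta(k'+K)$. Consequently the uniform lower bound $\varphi_\e\geq c_0\e^{-d}$ only holds on small sub-balls $A_k\subset\delta(k+K)$, $A_{k'}\subset\delta(k'+K)$ placed near points realising $\mathrm{dist}(k+K,k'+K)\leq D<\kappa$; Jensen on these subsets controls differences of \emph{local} averages $\overline u_\e|_{A_k}$, not of the cell-averages $u^\delta_k$ that actually enter the interpolation. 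Closing this gap requires an intra-cell non-local Poincar\'e inequality absorbing the variance of $u_\e$ on each $\delta(k+K)$ into the total energy, which I would derive via a chain covering of the connected compact set $K$ by overlapping balls linked through $\varphi_\e$; it is precisely the combination of $\kappa>D$ and the connectedness of $K$ that makes such a chain argument quantitative with constants uniform in $\e$ and $\delta$.
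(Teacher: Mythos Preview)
Your overall strategy---extract a discrete nearest-neighbour $H^1$ bound on the lattice averages $u^\delta_k$, interpolate via Kuhn simplices, and apply Poincar\'e--Wirtinger plus Rellich--Kondrachov---is exactly the paper's, and your chain-covering description of the intra-cell non-local Poincar\'e inequality matches the paper's Lemma~3.1. The full-support case is fine as you wrote it (the paper instead reduces that case to $\varphi=\chi_{B_r}$ with $r$ large, but your direct lower bound on $\phi_0$ is equally valid).

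The genuine gap is in your ``Main obstacle'' paragraph for the compactly supported kernel. You assert that for standard nearest neighbours $|k-k'|_1=1$ the interacting sub-balls can be ``placed near points realising $\mathrm{dist}(k+K,k'+K)\le D<\kappa$''. But $D=\inf\{r:E+B_{r/2}\text{ is connected}\}$ is \emph{not} an upper bound for the nearest-neighbour distances $\mathrm{dist}(K,K+e_i)$: connectivity of $E+B_{r/2}$ only requires \emph{some} generating set of $\mathbb Z^d$, not the canonical basis, to have all its distances below $r$. In general one only knows $D\le\max_i\mathrm{dist}(K,K+e_i)$, and this inequality can be strict; then for $\kappa$ just above $D$ the cells $\delta(k+K)$ and $\delta(k+e_i+K)$ may fail to interact through $\varphi_\e$ at all, and your sub-ball argument produces nothing in that coordinate direction.

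The paper closes this by first choosing, thanks to $\kappa>D$, finitely many vectors $k_1,\dots,k_N\in\mathbb Z^d$ that generate $\mathbb Z^d$ over $\mathbb Z$ and satisfy $\mathrm{dist}(K,K+k_i)<\kappa'<\kappa$. Your sub-ball-plus-intra-cell-Poincar\'e argument is then run along \emph{these} directions to bound each $|u^\delta_k-u^\delta_{k+k_i}|$, and only afterwards one writes each $e_j$ as an integer combination of the $k_i$ and uses a telescoping triangle inequality to recover the standard nearest-neighbour discrete gradient. This extra step is short but not optional; once you insert it, the rest of your outline goes through.
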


The second result combines the interpolation argument with `coarse graining'; that is, we consider averages of functions not on the characteristic period $\delta$ of the geometry, but on the larger `mesoscopic' scale $\e$.
 
 \begin{Theorem}\label{comp2}
Let $\Omega$ be a connected set, let 
$$
\lim_{\e\to0} \frac\e\delta= +\infty.
$$
Let $u_\e$ be such that $\sup_\e F_{\delta,\eps}(u_\e)<+\infty$.
Let the functions $\overline u_\e$ be 
defined as the piecewise-affine interpolations of the functions $\e k\mapsto u^\e_k$ defined on $\e\mathbb Z^d$ by
\begin{equation}
u^\e_k=\frac1{|\delta E\cap \e(k+[0,1]^d)|}\int_{\delta E\cap \e(k+[0,1]^d)} u_\e dx.
\end{equation}
Then, up to subsequences and addition of constants, $\overline u_\e\to u$ in $L^2_{\rm loc}(\Omega)$ for some $u\in H^1(\Omega)$.
\end{Theorem}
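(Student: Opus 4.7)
My plan is to derive an $H^1_{\rm loc}$-bound for $\overline u_\e$ from the energy bound $F_{\delta,\e}(u_\e)\le C$ and then invoke Rellich compactness. The structural fact underpinning everything is that, since $\e/\delta\to+\infty$, each cube $\e(k+[0,1]^d)$ with $\e(k+[0,1]^d)\subset \Omega$ satisfies
$$
|A_k|:=|\delta E\cap \e(k+[0,1]^d)|=(1+o(1))|K|\e^d,
$$
uniformly in $k$. Hence averages against $A_k$ behave as averages against a cube of size $\e$, up to a harmless factor.

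For the main estimate I would bound nearest-neighbour finite differences of the discrete function $k\mapsto u^\e_k$ by a local piece of the energy. By Jensen,
$$
|u^\e_k-u^\e_{k+e_i}|^2\le \frac{1}{|A_k|\,|A_{k+e_i}|}\int_{A_k}\!\!\int_{A_{k+e_i}}|u_\e(x)-u_\e(y)|^2\,dx\,dy.
$$
For every $x\in A_k$, $y\in A_{k+e_i}$ one has $|x-y|\le 2\sqrt d\,\e$, so $\phi_\e(x-y)\ge c\,\e^{-d}$ whenever the kernel $\phi_0$ is bounded below on the interval $[0,2\sqrt d]$; this is automatic if the support of $\phi_0$ is all of $[0,+\infty)$ or, more generally, covers a sufficiently large interval. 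Combining with the measure estimate, this gives
$$
|u^\e_k-u^\e_{k+e_i}|^2\le C\,\e^{2-d}\int_{A_k\times A_{k+e_i}}\e^{-2}\phi_\e(x-y)|u_\e(x)-u_\e(y)|^2\,dx\,dy.
$$
In the compact-support case where $\phi_0=\chi_{[0,1]}$ the basic two-point comparison fails. I would then introduce an intermediate scale $\tau\e$ with $\tau$ small enough that neighbouring sub-cubes have diameter at most $\e$, and chain the averages over a bounded number of sub-cubes, applying Cauchy--Schwarz along the chain: since $\delta\ll\tau\e$, each sub-cube still contains a definite fraction of $\delta E$ and the same type of estimate survives, with $C$ depending only on $d$, $|K|$, and the kernel.

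Summing the estimate over admissible $k$ and over $i=1,\dots,d$, and using that on each Kuhn simplex in $\e(k+[0,1]^d)$ the gradient of the affine interpolation $\overline u_\e$ is a fixed linear combination of the discrete differences $(u^\e_{k+e_i}-u^\e_k)/\e$, I obtain
$$
\int_{\Omega'}|\nabla\overline u_\e|^2\,dx\le C\,F_{\delta,\e}(u_\e)\le C,
$$
for every $\Omega'\Subset\Omega$, with $C$ independent of $\e$. Subtracting the average $c_\e=\fint_{\Omega'}\overline u_\e\,dx$, the Poincar\'e inequality on $\Omega'$ gives $\|\overline u_\e-c_\e\|_{H^1(\Omega')}\le C$, and Rellich--Kondrachov produces a subsequence converging strongly in $L^2(\Omega')$ to some $u\in H^1(\Omega')$. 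Exhausting $\Omega$ by a sequence $\Omega_n\Subset\Omega$ and extracting a diagonal subsequence yields the desired convergence in $L^2_{\rm loc}(\Omega)$, with $u\in H^1(\Omega)$ by lower semicontinuity of the $H^1$-seminorm.

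The principal technical obstacle is the lower bound $\phi_\e(x-y)\ge c\,\e^{-d}$ when $x,y$ lie in adjacent coarse cubes: for kernels of short range one cannot pair points in $A_k$ and $A_{k+e_i}$ directly, and one must replace the naive Jensen step by a chained estimate through intermediate sub-cubes of size $\tau\e$. Once this chaining is in place, the remaining ingredients---the measure estimate for $A_k$, the identification of discrete differences with the gradient of the piecewise-affine Kuhn interpolation, and the standard Rellich/diagonal-extraction argument---are routine and parallel the proof of Theorem~\ref{comp1}.
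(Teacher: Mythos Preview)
Your approach is correct and close in spirit to the paper's, but differs in one key technical choice. You work on the grid $\e\mathbb{Z}^d$ as in the statement, observe that points in adjacent $\e$-cubes may be up to $2\sqrt d\,\e$ apart, and for the short-range kernel $\phi_0=\chi_{[0,1]}$ propose a chaining argument through sub-cubes of side $\tau\e$. The paper instead sidesteps the chaining entirely by passing to a slightly finer grid $C\e\,\mathbb{Z}^d$ with $C=\tfrac{1}{1+\sqrt d}$: with this scaling, any two points in neighbouring cubes $Q^k_\e=C\e(k+[0,1)^d)$ and $Q^{k'}_\e$ with $|k-k'|\le 1$ automatically satisfy $|x-y|\le C\e(1+\sqrt d)=\e$, so a \emph{single} Jensen inequality on $(Q^k_\e\cap\delta E)\times(Q^{k'}_\e\cap\delta E)$ already yields
$\e^d\,|u^k_\e-u^{k'}_\e|^2/\e^2\le C'\,\e^{-d-2}\int_{\{|x-y|<\e\}}|u_\e(x)-u_\e(y)|^2\,dx\,dy$
even for $\phi_0=\chi_{[0,1]}$. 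Your chaining would work and is precisely the device the paper employs in Case~1 (Lemma~\ref{lemma compactness}); the rescaling trick is just a shortcut that avoids re-invoking it here. Either way one lands on the discrete energy bound $\sum_{|k-k'|=1}\e^d|u^k_\e-u^{k'}_\e|^2/\e^2\le C\,F_{\delta,\e}(u_\e)$, and the remaining passage to $\nabla\overline u_\e$ via Kuhn interpolation together with Rellich compactness is identical to what you outline.
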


 \subsection{$\Gamma$-convergence}
The compactness results in the previous section allow to consider $\Gamma$-convergence with respect to the convergence of the interpolations as defined therein. We can then compute the $\Gamma$-limits in the hypotheses of Theorem \ref{comp1} and Theorem \ref{comp2}, respectively, the degenerate cases being dealt with in Theorem \ref{no-comp-1} when $\e<\!<\delta$, in which case the $\Gamma$-limit is $0$ with respect to any topology weaker than the strong $L^2(\Omega)$ one, and in Proposition \ref{nonc-2} for the case $\e< \delta D$, when $\varphi_0$ has support $[0,1]$.

\begin{Theorem}
Let  $\delta=\delta_\e$ satisfy
$$
\lim_{\e\to0} \frac\e\delta= \kappa,
$$
and suppose either that $\varphi_0$ has support $[0,1]$ and $\kappa>D$, or that the support 
of $\varphi$ be the whole $\mathbb R^d$. Then the $\Gamma$-limit of $F_{\delta,\eps}$ as $\e\to 0$ with respect to the convergence of the piecewise-affine interpolations as in Theorem {\rm\ref{comp1}} is the quadratic form 
\begin{equation}
F^\kappa(u)=\int_\Omega \langle A^\kappa_{\rm hom} \nabla u, \nabla u\rangle\,dx,
\end{equation}
where the symmetric matrix $A^\kappa_{\rm hom}$ satisfies
\begin{equation}
\langle A^\kappa_{\rm hom}\xi,\xi\rangle=
\min\Bigl\{\int_{[0,1]^d\cap E}\int_{E} \varphi(\tfrac{x-y}\kappa)(\langle\xi, x-y\rangle+ u(x)-u(y))^2\,dx\,dy:  u \hbox{ $1$-periodic}\Big\}.
\end{equation}
\end{Theorem}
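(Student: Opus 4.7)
The plan is to adapt the standard two–scale / cell-formula homogenization argument for convolution-type functionals (developed in \cite{braides2019homogenization, alicandro2023variational}) to our disconnected setting, the new ingredient being the interpolation convergence of Theorem~\ref{comp1}. After the rescaling $y = x+\e\xi$, the functional $F_{\delta,\e}$ takes the form
$$
F_{\delta,\e}(u) = \int_{\mathbb R^d}\varphi(\xi)\int_{\Omega_\e^\xi}\frac{|u(x+\e\xi)-u(x)|^2}{\e^2}\,\chi_{\delta E}(x)\chi_{\delta E}(x+\e\xi)\,dx\,d\xi,
$$
where $\Omega_\e^\xi$ is the obvious admissibility set. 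Because $\e/\delta\to\kappa$, the indicator $\chi_{\delta E}(x)\chi_{\delta E}(x+\e\xi)$ is (after the change $x=\delta z$) a $1$-periodic function of $z$ whose cell average is $\int_{[0,1]^d\cap E}\chi_E(z+\kappa\xi)\,dz$, so a two–scale structure is clearly visible. This is the starting point for both the cell formula and the abstract localization argument.

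For the \emph{lower bound}, I would run the De Giorgi–Letta / Fonseca–Müller blow-up scheme: define set functions $\mu_\e(A)=F_{\delta,\e}^A(u_\e)$ (restriction to $A\times A$) along a recovery-type subsequence, extract a limit measure $\mu$, and use Theorem~\ref{comp1} plus standard diagonal arguments to show $\mu\ll\mathcal L^d$. At a Lebesgue point $x_0$ of $\nabla u$, blow up at scale $\rho_\e\gg\e$ and reduce via the interpolation convergence to testing against the affine function $\ell_\xi(x)=\langle\xi,x\rangle$ with $\xi=\nabla u(x_0)$. The infimum of the blown-up problems on the unit cell, with periodic perturbations admissible, is then exactly $\langle A^\kappa_{\rm hom}\xi,\xi\rangle$; one must verify that the minimum in the cell formula is attained (the direct method applies thanks to the $2$-connectedness of $E$ through $\varphi$ noted in the introduction, giving coercivity modulo constants on $1$-periodic functions).

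For the \emph{upper bound}, for an affine target $u=\ell_\xi+c$ I would take as recovery sequence $u_\e(x)=\langle\xi,x\rangle+\delta\,v(x/\delta)$, where $v$ minimizes the cell problem defining $\langle A^\kappa_{\rm hom}\xi,\xi\rangle$ extended by $1$-periodicity; a direct change of variables $x=\delta z$, $y=\delta w$ and $\e/\delta\to\kappa$ produces the cell formula in the limit, and the piecewise-affine interpolations converge to $\ell_\xi+c$ as required. For a general $u\in H^1(\Omega)$ one glues affine recoveries on a fine triangulation using a standard cut-off/De Giorgi slicing argument in the non-local setting (which is by now routine, see \cite{alicandro2023variational}), the long-range tail of $\varphi$ being controlled by \eqref{crescfi}. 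Finally, density of piecewise affine functions in $H^1(\Omega)$ upgrades the bound from the affine case to the whole space, and the quadratic form structure is inherited from the quadratic integrand.

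The main obstacle I expect is the \emph{interaction across different Kuhn cells} in the lower bound: because $\varphi$ may have unbounded support, the energy on a localization set $A$ is not additively local, and one must show that the long-range tail $\int_{|\xi|>R}\varphi(\xi)|\xi|^2 d\xi$ contributes only an error that vanishes as $R\to\infty$, uniformly in $\e$. This is ultimately an application of \eqref{crescfi} together with the uniform $L^2$ bound on $\overline u_\e$ coming from Theorem~\ref{comp1}, but it is the step where the disconnectedness of $E$, the piecewise-affine interpolation, and the non-locality interact, and where the argument genuinely departs from the connected Lipschitz case treated in \cite{braides2019homogenization}. A secondary technical point is the case where $\varphi_0$ has support $[0,1]$ with $\kappa>D$: here one must check that the cell problem is still coercive modulo constants, which follows from $E+B_{\kappa/2}$ being connected, so that $\varphi(\cdot/\kappa)$ activates enough pairs $(x,y)\in E\times E$ to force periodic minimizers to be constants when $\xi=0$.
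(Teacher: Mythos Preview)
Your outline is correct and is essentially what the paper has in mind: the paper's own proof consists of a single sentence deferring to \cite{braides2019homogenization}, observing that once the compactness of Theorem~\ref{comp1} is in hand the homogenization argument there applies verbatim. The one point worth adjusting is that you overstate the ``main obstacle'': the paper's position is that the disconnectedness of $E$ is \emph{entirely} absorbed by the compactness step, so that after Theorem~\ref{comp1} (together with Remark~\ref{l2conv}, which recasts the interpolation convergence as local $L^2$-convergence on $\delta E$) the blow-up and recovery arguments of \cite{braides2019homogenization} go through without modification, since they rely only on the coercivity supplied by compactness and never on the topological connectedness of $E$.
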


\begin{proof}
Once the compactness in Theorem \ref{comp1} is proved, the proof of the claim follows very closely that of the homogenization theorem for perforated domains in \cite{braides2019homogenization}, where functionals of the same form as $F_{\delta,\eps}$ are dealt with, with $\e=\kappa\delta$ and $E$ a periodic (topologically) connected Lipschitz domain. We refer to that paper for details.
\end{proof}

The second convergence result highlights a separation of scales, in which we may formally first let $\delta$ tend to $0$ and note that the characteristic functions $\chi_{\delta E\times \delta E}$ weakly$^*$ converge to the constant $|K|^2$ in $L^\infty(\Omega\times\Omega)$.

\begin{Theorem}\label{sepscales}
Let  $\delta=\delta_\e$ satisfy
$$
\lim_{\e\to0} \frac\e\delta= +\infty.
$$
 Then the $\Gamma$-limit of $F_{\delta,\eps}$ as $\e\to 0$ with respect to the convergence of the piecewise-affine interpolations as in Theorem {\rm\ref{comp2}} is 
\begin{equation}
F^\infty(u)=|K|^2C_\varphi\int_\Omega|\nabla u|^2\,dx,
\end{equation}
where $C_\varphi$ is given by \eqref{ci-fi}
\end{Theorem}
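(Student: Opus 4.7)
The plan is to establish the two $\Gamma$-convergence inequalities separately. As a preliminary reduction applicable to both, the layer-cake representation $\varphi(\xi) = \int_0^{+\infty} \chi_{B_r}(\xi)\,d\nu(r)$ (with the positive measure $\nu := -d\phi_0$) decomposes the energy as $F_{\delta,\e} = \int_0^{+\infty} F^r_{\delta,\e}\,d\nu(r)$, where $F^r_{\delta,\e}$ carries the basic kernel $\chi_{B_r}$. Combined with Fatou's lemma for the $\liminf$, linearity for the $\limsup$, and the scaling identity $\int_0^{+\infty} C_{\chi_{B_r}}\,d\nu(r) = C_\varphi$, this reduces both inequalities to the case $\varphi = \chi_{B_r}$.

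For the $\Gamma$-$\limsup$ I would fix $u \in C^2_c(\Omega)$ and take the constant recovery $u_\e \equiv u$; its coarse-grained interpolations converge to $u$ by smoothness. The change of variables $y = x + \e\xi$ combined with the Taylor expansion $|u(x)-u(x+\e\xi)|^2 = \e^2(\nabla u(x)\cdot\xi)^2 + o(\e^2)$ reduces the computation to the asymptotic evaluation of
$$
\int_{\mathbb R^d}\varphi(\xi)\Big[\int_\Omega \chi_{\delta E}(x)\chi_{\delta E}(x+\e\xi)(\nabla u(x)\cdot\xi)^2\,dx\Big]\,d\xi.
$$
The inner integrand is $\delta$-periodic in $x$ with period-average $g(\e\xi/\delta\bmod 1)$, where $g(\tau):=|K\cap (K-\tau\bmod 1)|$ is $[0,1]^d$-periodic with mean value $|K|^2$. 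As $\delta/\e\to 0$ the oscillations of $g(\e\xi/\delta\bmod 1)$ shrink in $\xi$, and standard periodic averaging yields convergence of the full double integral to $|K|^2\int_{\mathbb R^d}\varphi(\xi)\int_\Omega(\nabla u(x)\cdot\xi)^2\,dx\,d\xi$. The radial-symmetry identity $\int\varphi(\xi)\xi_i\xi_j\,d\xi = C_\varphi\,\delta_{ij}$ then yields $\lim_\e F_{\delta,\e}(u) = |K|^2 C_\varphi\int_\Omega|\nabla u|^2\,dx$; density of $C^2_c$ in $H^1$ and the diagonal $\Gamma$-$\limsup$ construction extend the upper bound to all of $H^1(\Omega)$.

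For the $\Gamma$-$\liminf$ with $\varphi = \chi_{B_r}$ I would use the Fonseca-M\"uller blow-up method combined with a double application of Jensen's inequality. Given $u_\e$ with $\sup_\e F_{\delta,\e}(u_\e) < +\infty$ and $\overline u_\e \to u$, form the localized energy measures $\mu_\e$ on $\Omega$ and extract a weak$^*$ limit $\mu$; the task reduces to showing $\frac{d\mu}{d\mathcal{L}^d}(x_0) \ge |K|^2 C_{\chi_{B_r}}|\nabla u(x_0)|^2$ at a.e.\ Lebesgue point $x_0$. After rescaling near $x_0$ and exploiting the $\delta\mathbb Z^d$-translation compactness of the geometry, one may assume the rescaled $u_\e$ are $\delta$-periodic perturbations of the affine function $\ell(y) = \nabla u(x_0)\cdot y$. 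The core difficulty is then the double Jensen: since $\delta \ll \e$ makes $\chi_{B_r}(\tfrac{x-y}{\e})$ essentially constant on pairs of $\delta$-cells, Jensen's inequality applied simultaneously in both variables on $\delta(k+K)\times\delta(k'+K)$ gives
$$
\int_{\delta(k+K)}\int_{\delta(k'+K)}|u_\e(x)-u_\e(y)|^2\,dx\,dy \ge (\delta^d|K|)^2(u^\delta_k-u^\delta_{k'})^2,
$$
producing the factor $(\delta^d|K|)^2/\delta^{2d}=|K|^2$ upon summation; a second Jensen passing from $\delta$-cell averages $u^\delta_k$ to the coarse-grained $\e$-cell averages $u^\e_\ell$ bounds the energy from below by $|K|^2$ times a Bourgain-Brezis-Mironescu-type functional on the interpolations, whose $\e\to 0$ limit is $C_{\chi_{B_r}}\int|\nabla u|^2\,dx$.
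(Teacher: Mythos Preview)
Your overall strategy aligns with the paper's: reduction to $\varphi=\chi_{B_r}$ via layer-cake/staircase approximation, constant recovery sequence for the $\limsup$, and Fonseca--M\"uller blow-up combined with Jensen for the $\liminf$. The $\limsup$ argument is essentially correct (the paper discretizes explicitly over $\delta$-cubes rather than invoking periodic averaging of $g(\e\xi/\delta)$, but the two computations are equivalent); one small correction is that $C^2_c(\Omega)$ is dense only in $H^1_0(\Omega)$, so you should work with $C^2(\overline\Omega)$ as the paper does.

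The $\liminf$ sketch, however, has a genuine gap concentrated in two linked places. First, the assertion that ``one may assume the rescaled $u_\e$ are $\delta$-periodic perturbations of the affine function'' is the technical heart of the argument and cannot be extracted merely from $\delta\mathbb Z^d$-invariance of the geometry. In the paper this step consumes most of the work: a De Giorgi-type slicing lemma is used to modify $u_\e$ so that it is exactly affine near $\partial Q_\rho(x_0)$ at negligible energy cost; one then extends periodically with a period $l_\e\in\delta\mathbb N$, passes to the infimum over periodic competitors, and applies a convexity/symmetrization step (replacing $w$ by the average of its $\frac{1}{M_\e}\mathbb Z^d$-translates) to force $\delta$-periodicity. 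None of this is automatic.

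Second, once that reduction is in hand, your ``second Jensen'' is both unnecessary and misleading. If $u_\e$ is a $\delta$-periodic perturbation of $x\mapsto\nabla u(x_0)\cdot x$, then the periodic part cancels in the cell averages and $u^\delta_k-u^\delta_{k'}=\nabla u(x_0)\cdot\delta(k-k')$; the \emph{first} Jensen on $\delta(k{+}K)\times\delta(k'{+}K)$ therefore already yields $|K|^2\tfrac{\delta^{2d}}{\e^{d+2}}\sum_{|k\delta-k'\delta|<r\e}(\nabla u(x_0)\cdot\delta(k{-}k'))^2$, a Riemann sum for $|K|^2\int_{B_r}(\nabla u(x_0)\cdot\xi)^2\,d\xi$. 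This is exactly how the paper concludes. By contrast, a further Jensen passing to $\e$-cell averages collapses the interaction to essentially nearest-neighbour $\e$-cells and produces a discrete energy with the wrong constant (at best $C_{\chi_{B_{r-\sqrt d}}}$ rather than $C_{\chi_{B_r}}$, and nothing when $r\le\sqrt d$); it does not yield ``a BBM-type functional on the interpolations'' with the correct kernel.

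If you want to bypass the periodic-reduction machinery entirely, a cleaner route is available after your first Jensen: that step gives $F_{\delta,\e}(u_\e)\ge |K|^2\,G^{r-o(1)}_\e(w)$, where $w$ is the $\delta$-piecewise-constant function $w\equiv u^\delta_k$ on $\delta(k{+}[0,1)^d)$ and $G^s_\e$ is the \emph{unperforated} Bourgain--Brezis--Mironescu functional on $\Omega$ with kernel $\chi_{B_s}$. Using the equivalence of the convergence with $\int_{\Omega'\cap\delta E}|u_\e-u|^2\to 0$ one checks $w\to u$ in $L^2_{\rm loc}(\Omega)$, and the classical BBM lower bound then yields $\liminf G^{r-o(1)}_\e(w)\ge C_{\chi_{B_r}}\int_\Omega|\nabla u|^2$ directly---no blow-up, no boundary matching, and no second Jensen.
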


 \begin{Remark}\label{l2conv}\rm
 We note that both the convergences described in the theorems above can be restated as a  local strong $L^2$-convergence; namely, that we have
 \begin{equation}
 \lim_{\e\to 0} \int_{\Omega'\cap \delta E} |u_\e-u|^2dx=0.
 \end{equation}
 This convergence has been extensively used by Zhikov \cite{zhikov1996connectedness}.
 
To check this claim, we note that by the Poincar\'e inequality for double integrals, for which we do not need the connectedness of the domain (see e.g.~the proof of \cite[Theorem 6.33]{Leoni}), taking into account the definition in Theorem \ref{comp2}. we have 
 \begin{eqnarray*}
 \int_{\delta E\cap \e(k+[0,1]^d)}|u_\e-u^\e_k|^2dx\le \frac1{\e^d}\int_{(\delta E\cap \e(k+[0,1]^d))^2}|u_\e(x)-u_\e(y)|^2dx\,dy,
 \end{eqnarray*}
 so that, summing on $k$
  \begin{eqnarray*}
 \int_{\Omega'\cap \delta E}|u_\e-\widehat u_\e|^2dx\le \e^2 C F_{\delta,\e}(u_\e),
 \end{eqnarray*}
 where $\widehat u_\e$ is the piecewise-constant interpolation of $\{u^\e_k\}$. The claim then follows noting that  $\widehat u_\e- \overline u_\e$ tends to $0$ locally in $L^2(\Omega)$.  For  the definition in Theorem \ref{comp1} the argument is the same. 
 
 As a consequence of this remark, we can suppose that the sequence $u_\e$ tends to $u$ locally in $L^2(\Omega)$, upon substituting $u_\e$ with the function defined by $u_\e$ on $\Omega\cap \delta E$, and $u$ otherwise.
 \end{Remark}


\section{Compactness}\label{compactness}
In this section we prove Theorems \ref{comp1} and \ref{comp2}.
We make the choice of the kernel $\varphi=\chi_{B_r}$, and we may suppose that $r=1$ up to a scaling argument.
With this choice functional (\ref{funzionale generale}) becomes
    \begin{equation}\label{funzionale generale 2}
    F_{\delta,\eps}(u) = \frac{1}{\eps^{d+2}} \int_{{\substack{ (\Omega \cap \delta E)^2\\ \{|x-y|\}<\eps   }}} |u(x)-u(y)|^2dxdy.
    \end{equation}
Note that this $\phi$ is a lower bound for any other positive kernel satisfying the condition $\phi(\xi) \geq  c>0  $ for $\xi \in B_{r_0}$, 
so that it is sufficient to state the compactness result for families of functions $u_{\eps}$ such that (\ref{funzionale generale 2}) is bounded.

\bigskip\noindent
\textbf{Case 1:} $\delta \sim \e$ (Theorem \ref{comp1}). 
With fixed $D<\kappa ' < \kappa$, we can assume $\frac{\e}{\delta}>\kappa'$. By definition of $D$, there exist $r>0$ and a finite number of vectors $k_1,...,k_N \in \Z^d$ generating $\Z^d$ on $\Z$ such that dist$(K,K+k_i)<r<\kappa'$ for all $i\in\{1,...,N\}$.

 For each $i\in\{1,...,N\}$, let $z_i \in \partial K$ and $w_i \in \partial K + k_i$ be such that $|z_i-w_i|={\rm dist}\,(K,K+k_i)$. Moreover, set
$$
A_i :=\{x \in K : |x-z_i|< \tfrac{1}{2}(\kappa'-r)\} \quad \text{and} \quad B_i :=\{x \in K+k_i : |x-w_i|< \tfrac{1}{2}(\kappa'-r)\}.
$$
By the assumptions on $K$, we have $|A_i|, |B_i|>0$. Moreover, if $x \in \delta A_i$ and $y \in \delta B_i$
$$
\frac{1}{\delta}|x-y|\le |\tfrac{1}{\delta}x-z_i| + |z_i - w_i|+|w_i - \tfrac{1}{\delta}y | < \kappa' - r +\text{dist}(K,K+k_i) < \frac{\eps}{\delta};
$$
that is, $\delta A_i \times \delta B_i \subseteq (\delta K \times \delta(K+k_i)) \cap \{|x-y|<\eps\}$.
With this observation in mind, let $u \in L^2(\Omega)$ and recall the notation 
$$u_k=\frac{1}{\delta^d |K|} \int_{\delta K+\delta k} u(x)dx. 
$$ We compute 
    \begin{eqnarray*}
   && |u_0-u_{k_i}|^2= \Big|\frac{1}{\delta^d |K|} \int_{\delta K} (u(x)-u(x+\delta k_i))dx \Big|^2  \\
    & =&\Big|\frac{1}{\delta^d |K| |\delta A_i| |\delta B_i|} \int_{\delta K\times \delta A_i\times \delta B_i } (u(x)-u(x+\delta k_i)+u(y)-u(y)+u(z)-u(z))dxdydz \Big|^2 \\ 
    &\leq&
     3 \bigg( \Big|\frac{1}{\delta ^ {2d} |K| \, |A_i|} \int_{\delta K \times \delta A_i} (u(x)-u(y))dxdy \Big|^2 + \Big|\frac{1}{\delta^ {2d}|A_i| \, |B_i|} \int_{\delta A_i \times \delta B_i } (u(y)-u(z))dydz \Big|^2  \\
     &&+ \Big|\frac{1}{\delta ^ {2d} |K| \, |B_i|} \int_{\delta (K+k_i)\times \delta B_i} (u(z)-u(w))dwdz \Big|^2  \bigg)  \\ &\leq&
     \frac{C}{\delta^{2d}} \Big(\int_{\delta K \times \delta A_i} |u(x)-u(y)|^2dxdy +\int_{\delta A_i \times \delta B_i } |u(y)-u(z)|^2dydz  \\ && +  \int_{\delta (K+k_i)\times \delta B_i} |u(z)-u(w)|^2dwdz \Big)\\
     &\leq& \frac{C}{\delta^{2d}} \Big(\int_{\delta K \times \delta K} |u(x)-u(y)|^2dxdy +\int_{\substack{\delta K \times \delta (K+k_i)\\ \{|x-y|<\eps\}}} |u(y)-u(z)|^2dydz  \\ 
     && +  \int_{\delta (K+k_i)\times \delta (K+k_i)} |u(z)-u(w)|^2dwdz \Big),
    \end{eqnarray*}
where in the first inequality we used the relation $(a+b+c)^2 \leq 3(a^2+b^2+c^2)$, and in the second one Jensen's inequality and the fact that $|\delta A_i|=\delta^d |A_i|$ and $|\delta B_i|=\delta^d |B_i|$. 
\begin{figure}[h!]
\centering
\begin{tikzpicture}[domain=0:2]
    \draw[solid,fill=gray, fill opacity = 0.2] (0,0) circle (0.5);
    \draw[solid,fill=gray, fill opacity = 0.2] (2,0) circle (0.5);
    \draw [blue,dashed,domain=-25:25] plot ({2.1*cos(\x)}, {2.1*sin(\x)});
    \draw[solid,fill=black, fill opacity = 0.2] (1.79,0) circle (0.26);
    \draw [blue,dashed,domain=155:205,shift={(2,0)}] plot ({2.1*cos(\x)}, {2.1*sin(\x});
    \draw[solid,fill=black, fill opacity = 0.2] (0.195,0) circle (0.26);
    \node at (1.2,1.2) {$|x-y|< \eps$};
    \draw (0.195,0) -- (0.6,-1);
    \draw (1.79,0) -- (1.5,-1);
    \node at (0.6,-1.2) {$\delta A_i$};
    \node at (1.5,-1.2) {$\delta B_i$};
\end{tikzpicture}
\caption{interacting portions of inclusions}
\label{Fig2}\end{figure}
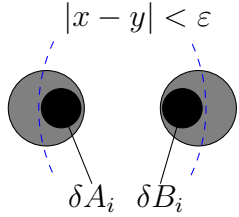
\begin{Example}\rm
    In the prototypical case $K=\overline B_c$, we can consider the largest balls fully contained in the region of interaction, as depicted in Fig.~\ref{Fig2}
\end{Example}
In order to take into account the first and the third integral of the sum above, we need to control long-range interactions with short-range interactions as specified in the following lemma.

\begin{Lemma}\label{lemma compactness}
There exists a constant $C$ depending on the ratio $\delta/\eps$  such that
    $$
    \int_{\delta K \times \delta K} |u(x)-u(y)|^2dxdy \leq C \int_{\substack{\delta K \times \delta K \\ \{|x-y|<\eps \} }} |u(x)-u(y)|^2dxdy 
    $$
\end{Lemma}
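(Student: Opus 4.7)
By the change of variables $x\mapsto \delta x$, $y\mapsto \delta y$ the claim is equivalent to proving that, with $\rho := \e/\delta$ and $v(x) := u(\delta x)$,
$$
\int_{K \times K} |v(x) - v(y)|^2 \, dx\, dy \;\leq\; C(\rho, K) \int_{(K \times K) \cap \{|x-y|<\rho\}} |v(x) - v(y)|^2 \, dx\, dy,
$$
i.e., a Poincaré-type inequality for the fixed compact connected set $K$. Since $K$ is the closure of a connected open set with $|\partial K|=0$, the interior $\mathrm{int}(K)$ is nonempty, open, connected (hence path-connected) and of full measure in $K$.

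First I would construct a finite covering $\{V_1,\dots,V_M\}$ of $K$ by open balls of radius $\rho/4$ centered in $\mathrm{int}(K)$, chosen so that the graph with an edge $i\sim j$ whenever $W_{ij}:=V_i\cap V_j\cap K$ has positive measure is connected. This is possible by compactness of $K$ together with path-connectedness of $\mathrm{int}(K)$: a polygonal path in $\mathrm{int}(K)$ between any two chosen centers can itself be tiled by overlapping balls centered in $\mathrm{int}(K)$ with consecutive centers within distance $\rho/4$, providing a connecting chain. The integer $M$, the graph-diameter $L$ of a spanning tree, and the minimum overlap measure $\mu:=\min_{i\sim j}|W_{ij}|>0$ then depend only on $\rho$ and $K$. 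Crucially, every pair $(x,y)\in (V_i\cap K)^2$ satisfies $|x-y|<\rho/2<\rho$, so every double integral over $(V_i\cap K)^2$ or $W_{ij}\times W_{ij}$ is controlled by the short-range right-hand side.

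Next, for each $i$ set $\bar v_i := \frac{1}{|V_i\cap K|}\int_{V_i\cap K} v\, dx$. Jensen's inequality yields
$$
\int_{V_i\cap K} |v-\bar v_i|^2 \, dx \;\leq\; \frac{1}{|V_i\cap K|}\int_{(V_i\cap K)^2}|v(x)-v(y)|^2\, dx\, dy,
$$
and for adjacent indices $i\sim j$, writing $\bar v_i - \bar v_j = \frac{1}{|W_{ij}|}\int_{W_{ij}}\bigl((\bar v_i-v)+(v-\bar v_j)\bigr)\, dx$ and applying Jensen once more gives
$$
|\bar v_i-\bar v_j|^2 \;\leq\; \frac{2}{\mu}\biggl(\int_{V_i\cap K} |v-\bar v_i|^2\, dx + \int_{V_j\cap K} |v-\bar v_j|^2\, dx\biggr).
$$
Telescoping along a spanning-tree path of length at most $L$ bounds $|\bar v_i-\bar v_j|^2$ for arbitrary $i,j$ by a finite sum of local variances, hence by the previous display in terms of the restricted short-range energy, with a constant depending only on $\rho$ and $K$.

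Finally, for $(x,y)\in K\times K$, choosing $i,j$ with $x\in V_i$, $y\in V_j$ and using $|v(x)-v(y)|^2\leq 3(|v(x)-\bar v_i|^2+|\bar v_i-\bar v_j|^2+|v(y)-\bar v_j|^2)$, integration over $K\times K$ combines the estimates above into the desired inequality. The main obstacle is the covering construction, where one must simultaneously guarantee that all overlaps have uniformly positive measure and that the associated graph is connected; this uses essentially both the topological connectedness of $\mathrm{int}(K)$ and the regularity assumption $|\partial K|=0$, and is the sole source of the dependence of $C$ on $\delta/\e = 1/\rho$.
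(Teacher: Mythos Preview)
Your argument is correct. It is a close cousin of the paper's proof but organised differently, and the difference is worth noting.

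The paper follows the path argument of \cite[Lemma 6.1]{alicandro2023variational}: for any two points $\eta',\eta''\in\delta K$ it constructs a discrete chain $\eta_0,\dots,\eta_{N+1}$ with $|\eta_{j+1}-\eta_j|\le\eps_1<\eps/3$, $B_{\eps_1}(\eta_j)\subset\delta K$ for the interior nodes, and $N\le C\lfloor\delta/\eps\rfloor$; then it telescopes $u(x_0)-u(x_{N+1})$ through intermediate variables $x_j\in B_{\eps_1}(\eta_j)$, squares, integrates over all $x_j$, and finally sums over a finite covering of $\delta K$ by $\eps_1$-balls. The Lipschitz assumption on $K$ is invoked to obtain the uniform bound on $N$.

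Your approach instead fixes a single covering of the rescaled set $K$ by $\rho/4$-balls with a connected overlap graph, passes to the means $\bar v_i$, controls the local variance by Jensen, and telescopes at the level of means along a spanning tree. This is more modular: the ``local Poincar\'e'' and the ``global chaining'' are cleanly separated, and you only use connectedness of $\mathrm{int}(K)$ together with $|\partial K|=0$, not the Lipschitz regularity the paper appeals to. The price is that you do not track the explicit dependence $C\sim(\delta/\eps)^2$ that the paper's path-length bound yields; since the lemma only asks for a constant depending on $\delta/\eps$, this is immaterial here. One small point to make precise when writing it out in full: in the covering construction, ensure that adjacent centres along the connecting chain are at distance strictly less than $\rho/4$, so that each centre lies in both neighbouring balls and hence $W_{ij}$ contains an interior ball of positive measure --- this is what guarantees $\mu>0$.
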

\begin{proof}
The proof follows the one of Lemma 6.1 in \cite{alicandro2023variational}. 
Since $K$ is a connected Lipschitz bounded set, there exists $\eps_1 \in \left(0,\frac{\eps}{3}\right)$ such that for every two points $\eta', \eta'' \in \delta K$ there exists a discrete path from $\eta'$ to $\eta''$; that is, a set of points $\eta'=\eta_0, \eta_1,\ldots,\eta_N,\eta_{N+1}=\eta''$ for which:
\begin{itemize}
    \item $|\eta_{j+1}-\eta_{j}| \leq \eps_1$ for $j\in\{0,1,\ldots,N\}$;
    \item for all $j=1,...,N$, the ball $B_{\eps_1}(\eta_{j})$ is contained in $\delta K$ (note that the two extremes are excluded);
    \item $N \leq C \left\lfloor \frac{ \delta}{\eps}\right \rfloor $ for all $\eta',\eta'' \in \delta K$. The constant $C$ depends only on $\text{diam}(K)$ and its Lipschitz constant;
\end{itemize}
Writing
    $$
    u(x_0)-u(x_{N+1})=u(x_0)-u(x_1)+u(x_1)-\cdots-u(x_N)+u(x_N)-u(x_{N+1}),
    $$
where $x_0 \in \delta K \cap B_{\eps_1}(\eta')$, $x_{N+1} \in \delta K \cap B_{\eps_1}(\eta'')$, and $x_j$ is a point in $B_{\eps_1}(\eta_j)$ for $j=1,..,N$. By integrating in all variables we get:
    \begin{align*}
    &\int_{(\delta K \cap B_{\eps_1}(\eta_0) ) \times (\delta K \cap B_{\eps_1}(\eta_{N+1}))} (u(x_0)-u(x_{N+1}))^2dx_0 dx_{N+1} \\
    &=(\eps_1)^{-Nd} \int_{B_{\eps_1}(\eta_1)}\cdots \int_{B_{\eps_1}(\eta_{N})} \int_{(\delta K \cap B_{\eps_1}(\eta_0) ) \times (\delta K \cap B_{\eps_1}(\eta_{N+1}))} \big(u(x_0)-u(x_1)+u(x_1)-\cdots \\
    &\hskip6cm \cdots -u(x_N)+u(x_N)-u(x_{N+1})\big)^2 dx_0 dx_{N+1} dx_1...dx_N \\ &\leq (N+1)(\eps_1)^{-Nd}\int_{\delta K \cap B_{\eps_1}(\eta_0)} \cdots \int_{\delta K \cap B_{\eps_1}(\eta_{N+1}) } \sum_{j=1}^{N+1}  ( u(x_{j})-u(x_{j-1}) )^2  dx_0\cdots dx_{N+1} \\
&\leq (N+1) \sum_{j=1}^{N+1} \int_{(\delta K \cap B_{\eps_1}(\eta_j) ) \times (\delta K \cap B_{\eps_1}(\eta_{j-1}))} (u(x_j)-u(x_{j-1}))^2dx_j dx_{j-1} \\
&\leq \left(C \left\lfloor \frac{ \delta}{\eps}\right \rfloor+1\right)^2 \int_{\substack{\delta K \times \delta K \\ \{|x-y|<\eps \} }} |u(x)-u(y)|^2dxdy,
\end{align*}
where in the last line by set inclusion we used that for any $(x_j, x_{j-1}) \in B_{\eps_1}(\eta_j) \times B_{\eps_1}(\eta_{j-1})$ holds
$$
|x_j-x_{j-1}|\leq |x_j-\eta_j|+|\eta_j-\eta_{j-1}|+|\eta_{j-1}-x_{j-1}|\leq 3\eps_1<\eps
$$
and that $N$ is at most $C \left\lfloor \frac{ \delta}{\eps}\right \rfloor$. We can obtain the final estimate by covering $\delta K$ with a finite number of balls of radius $\eps_1$ (this number depends on the ratio $\delta/\eps$ only) and then summing up the previous inequality applied to all possible pairs of these balls.
\end{proof}

By applying Lemma \ref{lemma compactness}, we deduce that there exists a constant $C$ such that, for $i=1,\ldots,d$,
 \begin{multline*}
    |u_0-u_{k_i}|^2\leq
    \frac{C}{\delta^{2d}} \bigg(\int_{\substack{\delta K \times \delta K \\ \{|x-y|< \eps \}}} |u(x)-u(y)|^2dxdy +\int_{{\substack{ \delta K \times \delta (K + k_i) \\ \{|x-y|\}<\eps   }}} |u(y)-u(z)|^2dydz 
    \\
    +\int_{\substack{ \delta (K + k_i) \times \delta (K + k_i) \\ \{|x-y|<\eps \}}} |u(z)-u(w)|^2dwdz \bigg).
    \end{multline*}
Since we need an estimate of 
$\delta^d \big(\frac{u_0-u_{k_i}}{\delta} \big)^2$,
we multiply by $\delta^{d-2}$ and use that $\frac{C}{\delta^{2d}}\delta^{d-2}= \frac{C}{\eps^{d+2}}$ (by $ \delta \sim \eps $) to find

    $$
    \delta^d \left(\frac{u_0-u_{k_i}}{\delta} \right)^2 \leq \frac{C}{\eps^{d+2}} \int_{{\substack{(\delta K \, \cup \,  \delta(K+k_i))^2 \\ \{|x-y|\}<\eps } }} |u(x)-u(y)|^2 dxdy
    $$
By periodicity it also holds
    $$
    \delta^d \left(\frac{u_k-u_{k+k_i}}{\delta} \right)^2 \leq \frac{C}{\eps^{d+2}} \int_{{\substack{(\delta (K+k) \, \cup \, \delta(K+(k+k_i)))^2 \\ \{|x-y|\}<\eps } }} |u(x)-u(y)|^2 dxdy.
    $$
Finally, since each vector of the canonical basis can be expressed as a linear combination of the $k_i$'s with integer coefficients, via triangle inequality we bound
$$
    \delta^d \left(\frac{u_k-u_{k+e_i}}{\delta} \right)^2 \leq \frac{C}{\eps^{d+2}} \int_{{\substack{(\delta (K+k) \, \cup \, \delta(K+(k+k_i)))^2 \\ \{|x-y|\}<\eps } }} |u(x)-u(y)|^2 dxdy,
    $$
and thus, summing over nearest neighbours $k,k' \in \Z^d$ such that $k\delta + \delta K \subseteq \Omega $, and the same for $k'$, we get
    $$
   \sum_{\substack{ k,k' \\ |k-k'|=1}} \delta^{d} \left(\frac{u_k-u_{k'}}{\delta}\right)^2 \leq \frac{C}{\eps^{d+2}} \int_{{\substack{ (\Omega \cap \delta E)^2\\ \{|x-y|\}<\eps   }}} |u(x)-u(y)|^2dxdy.
    $$
Therefore, from the equiboundedness of the functional along the sequence $u_\eps$ we get a uniform bound on the piecewise-affine interpolation $\overline u_\e$, as defined in Theorem \ref{comp1}, and thus the existence of a converging subsequence in the sense of (\ref{def convergence delta sim eps}). 

\bigskip
\textbf{Case 2}: $\eps \gg \delta$  (Theorem \ref{comp2}). 
Set $C=C(d) := \frac{1}{1+\sqrt{d}}$, $Q=[0,1]^d$ and
   $$
   \mathcal{Z}_{\eps} = \big\{ k \in \Z^d:  \overline{Q_\eps ^k}  := C\eps (k + \overline Q)  \subseteq \Omega \big\}.
   $$
Note that if $(x,y) \in Q_\eps^k \times Q_\eps^{k'}$ with $|k-k'| \le 1$; that is $x=C\eps (k + w)$ and $y=C\eps(k'+ z)$ with $w,z \in Q$, it automatically holds
$$
|x-y| \le C\eps |k-k'| + C\eps |w-z| \le C\eps \left(1+\text{diam}(Q)\right) = \eps.
$$
Moreover, for each $k \in Z_\eps$, consider
$$
Z_k  := \{j \in \mathbb{Z}^d : \delta(j+K) \subseteq Q_\eps^k\}= \Big\{j \in \mathbb{Z}^d : j+K \subseteq \frac{\eps}{\delta} (k+ Q)\Big\}.
$$
Note that 
$\delta E \cap Q_\eps^k \supseteq \bigcup_{j \in Z_k} \delta(j+K) \cap Q_\eps^k= \bigcup_{j \in Z_k} \delta(j+K)$.
 Moreover, assume $K \subseteq B_L$ and fix $j \in \frac{\eps}{\delta} k + B(0, \frac{\eps}{2\delta}-L)$ and $ x = j +  w \in j + K$; then for all $i\in\{1,\ldots,d\}$ we have
$$
|x_i - \tfrac{\eps}{\delta} k_i | \le |x- \tfrac{\eps}{\delta} k| \le |j -  \tfrac{\eps}{\delta} k| + L \le \frac{\eps}{2\delta};
$$
that is, $j+K \subseteq \frac{\eps}{\delta} (k+ Q)$. We deduce that $\frac{\eps}{\delta} k+ \{- \lfloor{ \frac{\eps}{2\delta}-L}\rfloor,...,0,..., \lfloor{ \frac{\eps}{2\delta}-L}\rfloor\}^d \subseteq Z_k$ and consequently $\#Z_k \ge (2(\frac{\eps}{\delta}-L-1) + 1) ^d \ge \left(\frac{\eps}{\delta}\right)^d$, provided $\frac{\eps}{2\delta} \ge L+1.$
In particular 
$$
|\delta E \cap Q_\eps^k| \ge \sum_{j \in Z_k} |\delta (j+K)| =\#Z_k |K| \delta^d \ge  |K| \eps^d.
$$

We now write
\begin{align*}
     F_{\eps}(u_{\eps})&=\frac{1}{\eps^{d+2}} \int_{{\substack{ (\Omega \cap \delta E)^2\\ \{|x-y|\}<\eps   }}} (u_{\eps}(x)-u_{\eps}(y))^2dxdy \\&\geq \frac{1}{\eps^{d+2}} \sum_{\substack{|k-k'|=1 \\ k,k' \in \mathcal{Z}_{\eps}}} \int_{(Q^k_{\eps} \times Q^{k'}_{\eps}) \cap (\delta E)^2} (u_{\eps}(x) - u_{\eps}(y))^2 dxdy \\
     & = \frac{1}{\eps^{d+2}} \sum_{\substack{|k-k'|=1 \\ k,k' \in \mathcal{Z}_{\eps}}} |\delta E \cap Q_\eps^k||\delta E \cap Q_\eps^{k'}| \fint_{(Q^k_{\eps} \times Q^{k'}_{\eps}) \cap (\delta E)^2} (u_{\eps}(x) - u_{\eps}(y))^2 dxdy \\
     &\ge      \frac{|K|}{\eps^{d+2}} \sum_{\substack{|k-k'|=1 \\ k,k' \in \mathcal{Z}_{\eps}}} \eps^{2d} \left| \fint_{(Q_\eps^k \cap \delta E)\times (Q_\eps^{k'} \cap \delta E)} \left(u_\eps(x) - u_\eps(y)\right) dx dy  \right|^2 \\
     &= \frac{|K|}{\eps^{d+2}} \sum_{\substack{|k-k'|=1 \\ k,k' \in \mathcal{Z}_{\eps}}} \eps^{2d} \left|  u_\eps^k   - u_\eps^{k'}   \right|^2 \\     
     &=|K| \sum_{\substack{|k-k'|=1 \\ k,k' \in \mathcal{Z}_{\eps}}} \eps^d \frac{|u^k_{\eps}-u^{k'}_{\eps}|^2}{\eps^2},
\end{align*}
with the averages $u^{k}_{\eps}$ defined by
  $$
  u^{k}_{\eps}= \frac{1}{|Q_{\eps}^k \cap \delta E | }\int_{Q_{\eps}^k \cap \delta E} u_{\eps}(x)dx.
  $$
As in the previous case, we get a uniform bound on the piecewise-affine interpolation $\overline{u}_{\eps}$, as defined in Theorem \ref{comp2}, so we can conclude that there exists a subsequence converging in the sense of (\ref{def convergence delta ll eps}).

\section{Asymptotic analysis}
This section is devoted to the proof of Theorem \ref{sepscales}, subdividing it in a lower and an upper bound.

\subsection{Liminf inequality}\label{subsection liminf inequality}
We first prove a lower bound. To that end, let $u_{\eps} \to u$ in the sense of (\ref{def convergence delta ll eps}) with $F_{\eps}(u_{\eps}) \leq M < +\infty$. 
Using the conclusion in Remark \ref{l2conv}  we can suppose that the sequence $u_\e$ tends to $u$ locally in $L^2(\Omega)$, and actually that $u_\e=u$ outside $\delta E$.

In order to estimate the energy we use Fonseca-M\"uller's blow-up technique \cite{FM}, as adapted to homogenization problems in \cite{BMS}.
To that end, we restrict $x \in A \subset \Omega$, where $A$ is an open subset, defining
    \begin{equation}\label{def of mu_eps}
     F_{\eps}(u_{\eps},A)= \frac{1}{\eps^{d+2}} \int_{  \substack{(\boldsymbol{A} \times \Omega) \cap (\delta E)^2 \\ \{|x-y|<\eps\}}}|u_{\eps}(x)-u_{\eps}(y)|^2dxdy=\mu_{\eps}(A)
    \end{equation}
Since the measures $\mu_{\eps}$ are equibounded in mass, we can take a converging subsequence $\mu_{\eps} \overset{\ast}{\rightharpoonup} \mu$,
so that by the lower semicontinuity of weak$^{\ast}$ convergence on open sets it holds
    $$
    \liminf_{\eps \to 0} F_{\eps}(u_{\eps}) \geq \mu(\Omega) \geq \int_{\Omega} \frac{d \mu}{dx} dx.
    $$    
From now on, we denote $Q := (-\frac{1}{2}, \frac{1}{2})^d$ and, for all $r>0$ and $x_0 \in \R^d$, $Q_{\rho}(x_0) := x_0 + \rho Q$.
We fix a Lebesgue point $x_0 \in \Omega$ for $u$ and $\nabla u$, such that there exists 
    $$
    \frac{d\mu}{dx}(x_0)= \lim_{\rho \to 0} \frac{\mu(Q_{\rho}(x_0))}{\rho^d}.
    $$
Since $\mu (\partial Q_{\rho}(x_0) )=0$ for almost all $\rho >0$, we can construct ${\rho}_\e \to 0$, with ${\rho}_\e \gg \e$, such that
  \begin{equation}\label{measure quotient}
    \frac{d\mu}{dx}(x_0)= \lim_{\e \to 0} \frac{\mu_\e(Q_{\rho_\e}(x_0))
  }{\rho_\e^d}.
  \end{equation}
From now on we will tacitly suppose
that $\rho=\rho_\e$. Now we prove a classical lemma that allows to match boundary data. 
\begin{Lemma}\label{match boundary data lemma}
Let $\{v_\e\} \subseteq L^2(Q)$ such that $v_\e \to v$ for some $v\in H^1(Q)$. For all $\eta >0$, there exists another sequence $\{v^\eta_\e\} \subseteq L^2(Q)$, such that 
\begin{itemize}
    \item $v^\eta_\eps \to v$ in $L^2(Q)$
    \item $v^\eta_\eps =v $ in the set $\{ z \in Q: \emph{dist}(z,\partial Q) < \eta \}$
    \item $v^\eta_\eps =v_\eps $ in the set $\{ z \in Q: \emph{dist}(z,\partial Q) > 2\eta \}$
    \item if we define $$
    F'_{\eps}(v_{\eps})=\frac{1}{\eps^{d+2}} \int_{  \substack{(Q\times Q) \cap (\delta E)^2 \\ \{|z-w|<\eps\}}}\left(v_\eps(z)-v_\eps(w)\right)^2dz dw,
    $$
    then it holds
         $$
          \limsup_{\eps \to 0} (F'_{\eps}(v^\eta_\eps)-F'_{\eps}(v_\eps ) )\leq o(1),
           $$
           as $\eta \to 0$.
\end{itemize}
\end{Lemma}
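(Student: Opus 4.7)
I use a De Giorgi slicing argument adapted to the nonlocal setting. Fix $\eta > 0$ and a large integer $N$, to be tuned at the end. Introduce the nested regions $Q_i := \{z \in Q : \operatorname{dist}(z, \partial Q) > \eta + i\eta/N\}$ for $i = 0, 1, \ldots, N$, and smooth cutoffs $\zeta_i \in C^\infty(Q)$ with $\zeta_i \equiv 1$ on $Q_i$, $\zeta_i \equiv 0$ on $Q \setminus Q_{i-1}$, and $\|\nabla \zeta_i\|_\infty \le CN/\eta$. Set $v^{(i)}_\e := \zeta_i v_\e + (1 - \zeta_i) v$. The three pointwise properties in the statement are immediate for every $i$, and $v^{(i)}_\e \to v$ in $L^2(Q)$ since $v_\e$ does and $\zeta_i \in [0,1]$. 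The candidate for $v^\eta_\e$ will be $v^{(i^*(\e))}_\e$ for a suitable index $i^*(\e) \in \{1, \ldots, N\}$ selected by a pigeonhole argument.

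\textbf{Energy comparison zone by zone.} For a Borel set $A \subseteq Q$, write $F'_\e(w;A)$ for the energy $F'_\e(w)$ with both integration variables restricted to $A$. Split the integration domain of $F'_\e(v^{(i)}_\e) - F'_\e(v_\e)$ into three zones: (i) both $z, w \in Q_i$, where $v^{(i)}_\e = v_\e$ and the contribution is zero; (ii) both in the outer region $Q \setminus Q_{i-1} \subset A_{2\eta} := \{\operatorname{dist}(\cdot, \partial Q) < 2\eta\}$, where $v^{(i)}_\e = v$ and the contribution is bounded by $F'_\e(v; A_{2\eta})$; (iii) remaining transition or mixed pairs, which, being at distance less than $\e$, all lie in an $\e$-neighborhood $T_i^+$ of the strip $T_i := Q_{i-1} \setminus Q_i$ of width $\eta/N$. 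On (iii) the pointwise bound
\[
(v^{(i)}_\e(z) - v^{(i)}_\e(w))^2 \le 3\zeta_i(z)^2 (v_\e(z) - v_\e(w))^2 + 3(1 - \zeta_i(z))^2 (v(z) - v(w))^2 + 3(\zeta_i(z) - \zeta_i(w))^2 (v_\e(w) - v(w))^2,
\]
combined with $|\zeta_i(z) - \zeta_i(w)| \le CN\e/\eta$ on $|z - w| < \e$, yields
\[
(\mathrm{transition})_i \le C\bigl(F'_\e(v_\e; T_i^+) + F'_\e(v; T_i^+)\bigr) + \frac{CN^2}{\eta^2}\,\|v_\e - v\|_{L^2(T_i^+)}^2.
\]

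\textbf{Pigeonhole and passage to the limit.} Provided $\e \ll \eta/N$, each pair with $|z - w| < \e$ lies in $O(1)$ of the neighborhoods $T_i^+$, so summing in $i$ gives
\[
\sum_{i=1}^N (\mathrm{transition})_i \le C\bigl(M + F'_\e(v; A_{2\eta}^+)\bigr) + \frac{CN^2}{\eta^2}\,\|v_\e - v\|_{L^2}^2.
\]
A pigeonhole then selects $i^* = i^*(\e)$ such that
\[
F'_\e(v^{(i^*)}_\e) - F'_\e(v_\e) \le F'_\e(v; A_{2\eta}) + \frac{C\bigl(M + F'_\e(v; A_{2\eta}^+)\bigr)}{N} + \frac{CN^2}{\eta^2}\,\|v_\e - v\|_{L^2}^2.
\]
Setting $v^\eta_\e := v^{(i^*)}_\e$ and taking $\limsup_{\e \to 0}$ with $N, \eta$ fixed, the $L^2$-error vanishes by hypothesis, and a localization of Theorem~\ref{sepscales} applied to $v \in H^1$ gives $\limsup_\e F'_\e(v; A_{2\eta}) \le |K|^2 C_\varphi \int_{A_{2\eta}} |\nabla v|^2 =: \omega(\eta)$, with $\omega(\eta) \to 0$. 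Hence $\limsup_\e (F'_\e(v^\eta_\e) - F'_\e(v_\e)) \le \omega(\eta) + CM/N$, and letting $N = N(\eta) \to \infty$ slowly as $\eta \to 0$ finishes the proof.

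\textbf{Main obstacle.} The delicate point is that a naive single-cutoff estimate (taking $N = 1$) produces a term $F'_\e(v_\e; A_{2\eta}) \le M$ which does not vanish as $\eta \to 0$, since no local energy control is available for the bounded sequence $v_\e$. The multi-slice pigeonhole is precisely what replaces this $M$ by $M/N$, and one must verify that for $\e$ small enough relative to $\eta/N$ each short-range pair meets only $O(1)$ strips $T_i^+$, so that summation over the $N$ indices does not inflate the boundary-layer energy of $v_\e$ by a factor $N$.
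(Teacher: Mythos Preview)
Your argument is essentially the same De Giorgi slicing/pigeonhole construction the paper uses: nested cubes between distance $\eta$ and $2\eta$ from $\partial Q$, cutoffs with gradient $\sim N/\eta$, the threefold pointwise splitting of $(v^{(i)}_\e(z)-v^{(i)}_\e(w))^2$, averaging over the $N$ layers, and the observation that for $\e<\eta/N$ no pair can straddle the transition zone. The decomposition and the treatment of the three terms match the paper's almost line by line.

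There is one point you should fix. To control the fixed-$v$ boundary term $F'_\e(v;A_{2\eta})$ you invoke ``a localization of Theorem~\ref{sepscales}''. But this lemma is itself a step in the proof of the liminf half of Theorem~\ref{sepscales}, so the citation is circular; moreover the limsup half of that theorem is only established for $C^2$ test functions, not arbitrary $v\in H^1(Q)$. The paper avoids this entirely by the elementary difference-quotient bound: after the change of variables $w=z+\e\xi$,
\[
\frac{1}{\e^{d+2}}\int_{\substack{A_{2\eta}\times A_{2\eta}\\ |z-w|<\e}}(v(z)-v(w))^2\,dz\,dw
\;\le\; \int_{|\xi|<1}|\xi|^2\int_{A_{2\eta}}\Bigl|\frac{v(z+\e\xi)-v(z)}{\e|\xi|}\Bigr|^2dz\,d\xi
\;\le\; C\|\nabla v\|_{L^2(A_{2\eta}+B_\eta)}^2,
\]
using the standard $H^1$ characterisation via translates (after extending $v$ across $\partial Q$). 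This gives exactly your $\omega(\eta)\to 0$ without any appeal to the theorem being proved. With that replacement your proof is correct and coincides with the paper's.
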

\begin{proof}
  Fix $N \in \mathbb{N}$ and for $k \in \{1,...,N\}$ define $Q_k^N := \{ z \in Q \colon \text{dist}(z, \partial Q) > \frac{\eta}{N}(N+k)\}$ and $S_k  := Q_{k-1}^N \setminus \overline{Q_{k}^N}$. Let $\pphi_k$ be a cut-off function such that $\pphi_k=1$ in $Q_k^N$, $\pphi_k=0$ in $\R^d \setminus \overline{Q_{k-1}^N}$ and $|\nabla \pphi_k| \le \frac{N}{\eta}$. Define
  $$
  v_\e ^k  := \pphi_k v_\eps + (1-\pphi_k) v.
  $$
  Note that
$v_\e^k (z) - v_\e^k (w)= \pphi_k(z)(v_\e (z) - v_\e (w)) + (1-\pphi_k(z)) (v(z)-v(w) )(\pphi_k(z) - \pphi_k(w)) (v_\e(w)-v(w))$ for all $z, w \in Q$.

We decompose $Q \times Q$ as follows:
\begin{align*}
&A_1 := Q_k^N \times Q_k^N,  \quad \quad \pphi_k(z)=1, \ \pphi_k(w)=1 \ \forall (z,w) \in A_1;\\
&A_2 := (Q \setminus \overline{Q_{k-1}^N})\times (Q \setminus \overline{Q_{k-1}^N}), \quad \quad \pphi_k(z)=0, \ \pphi_k(w)=0 \ \forall (z,w) \in A_2\\
&A_3 := S_k \times Q, \quad \quad |\nabla \pphi_k(z)|\leq 1 \\
&A_3' := \left(Q \times S_k \right) \setminus A_3 \\
&A_4  := Q_k^N \times (Q \setminus \overline{Q_{k-1}^N}), \quad \quad |z-w| \geq \frac{\eta}{N},  \forall (z,w) \in A_2 \\
&A_4'  := \big(Q \setminus \overline{Q_{k-1}^N} \big)\times Q_k^N. 
\end{align*}
The integral in $A_1$ is estimated by:
\begin{align*}
 &\frac{1}{\eps^{d+2}} \int_{  \substack{A_1 \cap (\delta E)^2 \\ \{|z-w|<\eps\}}}\left(v_\eps^k(z)-v_\eps^k(w)\right)^2dz dw\\
 &=\frac{1}{\eps^{d+2}} \int_{  \substack{A_1 \cap (\delta E)^2 \\ \{|z-w|<\eps\}}}\left(v_\eps(z)-v_\eps(w)\right)^2dz dw \leq \frac{1}{\eps^{d+2}} \int_{  \substack{(Q \times Q) \cap (\delta E)^2 \\ \{|z-w|<\eps\}}}\left(v_\eps(z)-v_\eps(w)\right)^2dz dw= F'_{\eps}(v_{\eps})
\end{align*}
Since $Q$ has a Lipschitz boundary and $v \in H^1(Q)$, we can extend it to the whole $\R^d$. Denote the set 
$$
S_\eta=\{ z \in Q: \text{dist}(z,\partial Q) < 2 \eta\}.
$$
We have
\begin{multline*}
 \frac{1}{\eps^{d+2}} \int_{  \substack{A_2 \cap (\delta E)^2 \\ \{|z-w|<\eps\}}}\left(v_\eps^k(z)-v_\eps^k(w)\right)^2dz dw =\frac{1}{\eps^{d+2}} \int_{  \substack{A_2 \cap (\delta E)^2 \\ \{|z-w|<\eps\}}}\left(v(z)-v(w)\right)^2dz dw \\
 \leq \frac{1}{\eps^{d+2}} \int_{  \substack{ (S_\eta \times S\eta) \cap (\delta E)^2 \\ \{|z-w|<\eps\}}}\left(v(z)-v(w)\right)^2dz dw \leq
 \frac{1}{\eps^{d+2}} \int_{  \substack{ (S_\eta \times S_\eta) \\ \{|z-w|<\eps\}}}\left(v(z)-v(w)\right)^2dz dw  \\ 
 = \int_{|\xi|<1} |\xi|^2 \int_{S_{\eta,\xi}}\left( \frac{v(z+\eps \xi)-v(z)}{\eps |\xi|} \right)^2dz d\xi , 
\end{multline*}
where $S_{\eta,\xi}= \{z \in S_\eta: z+\eps \xi \in S_{\eta} \} $. Note that, for all $|\xi| <1 $ and $\eps$ small enough, the set $S_{\eta,\xi}$ is well included in $S_\eta+B_\eta(0)$. Thus we can use a well-known characterization of Sobolev space $H^1$ to estimate
$$
\int_{|\xi|<1} |\xi|^2 \int_{S_{\eta,\xi}}\left( \frac{v(z+\eps \xi)-v(z)}{\eps |\xi|} \right)^2dz d\xi \leq \int_{|\xi|<1} |\xi|^2 \|\nabla v \|_{L^2(S_\eta+B_\eta(0))} d\xi = \|\nabla v \|_{L^2(S_\eta+B_\eta(0))},
$$
which tends to 0 as $\eta \to 0$ independently of $\eps$. As for $A_3$, we have
\begin{align}
\frac{1}{\eps^{d+2}} \int_{  \substack{A_3 \cap (\delta E)^2 \\ \{|z-w|<\eps\}}}&\left(v_\eps^k(z)-v_\eps^k(w)\right)^2dz dw \label{stima v^k_eps}  \\
 &=\frac{1}{\eps^{d+2}} \int_{  \substack{A_3 \cap (\delta E)^2 \\ \{|z-w|<\eps\}}}(\pphi_k(z)\left(v_\e (z) - v_\e (w)\right) + \left(1-\pphi_k(z)\right) \left(v(z)-v(w) \right)\nonumber \\ &\ \ 
 +(\pphi_k(z) - \pphi_k(w)) (v_\e(w)-v(w)))^2dz dw  \nonumber \\ 
 &\le \frac{3}{\eps^{d+2}} \int_{  \substack{A_3 \cap (\delta E)^2 \\ \{|z-w|<\eps\}}}\left(v_\e (z) - v_\e (w)\right)^2 dz dw \label{first term of three} \\
 &\ \ +\frac{3}{\eps^{d+2}} \int_{  \substack{A_3 \cap (\delta E)^2 \\ \{|z-w|<\eps\}}}\left(v (z) - v(w)\right)^2 dz dw \label{second term of three}\\
 &\ \ +\frac{3}{\eps^{d+2}} \int_{  \substack{A_3 \cap (\delta E)^2 \\ \{|z-w|<\eps\}}}\left(\pphi_k (z) - \pphi_k(w)\right)^2(v_\eps(w)-v(w))^2 dz dw \label{third term of three}
\end{align}
Recalling that  $A_3=S_k \times Q$, we estimate (\ref{third term of three}) with  
\begin{eqnarray*}
  &&\hskip-2cm\int_{  \substack{(S_k \times Q) \cap (\delta E)^2 \\ \{|z-w|<\eps\}}}\left(\pphi_k (z) - \pphi_k(w)\right)^2(v_\eps(w)-v(w))^2 dz dw  \\ 
  &\le& \frac{N^2}{\eta^2}\int_{ \substack{ S_k \times Q \\  \{|z-w|<\eps\}}}|z - w|^2(v_\eps(w)-v(w))^2 dz dw \\
  & =& \frac{N^2}{\eta^2} \int_{S_k}  \left(\int_{  Q \cap B(w, \eps) }|z - w|^2 dz \right) (v_\eps(w)-v(w))^2 dw  \\
   &\le& \frac{N^2}{\eta^2} \int_{S_k}  \eps^2 |B_\eps| (v_\eps(w)-v(w))^2 dw  \\
   &=&\eps^{d+2} \frac{N^2}{\eta^2} \int_{S_k}   (v_\eps(w)-v(w))^2 dw .
\end{eqnarray*}
Note that $v_\eps \to v$ in $L^2(Q)$ by assumption, so the last term goes to $0$ as $\eps \to 0$. Moreover, when we sum over $k=1,...,N$ the terms (\ref{first term of three}) and (\ref{second term of three}), we find
\begin{align*}
    &\sum_{k=1}^N \bigg( \frac{1}{\eps^{d+2}} \int_{  \substack{A_3 \cap (\delta E)^2 \\ \{|z-w|<\eps\}}}\left(v_\e (z) - v_\e (w)\right)^2 dz dw + \frac{1}{\eps^{d+2}} \int_{  \substack{A_3 \cap (\delta E)^2 \\ \{|z-w|<\eps\}}}\left(v (z) - v(w)\right)^2 dz dw \bigg) \\
    &\le \frac{1}{\eps^{d+2}} \int_{  \substack{Q^2 \cap (\delta E)^2 \\ \{|z-w|<\eps\}}}\left(v_\eps(z)-v_\eps(w)\right)^2dz dw + \frac{1}{\eps^{d+2}} \int_{  \substack{Q^2 \cap (\delta E)^2 \\ \{|z-w|<\eps\}}}\left(v(z)-v(w)\right)^2dz dw.
\end{align*}
Thus we may find $k \in \{1,\ldots,N\}$ such that
\begin{eqnarray*}
    &&  \frac{1}{\eps^{d+2}} \int_{  \substack{A_3 \cap (\delta E)^2 \\ \{|z-w|<\eps\}}}\left(v_\e (z) - v_\e (w)\right)^2 dz dw + \frac{1}{\eps^{d+2}} \int_{  \substack{A_3 \cap (\delta E)^2 \\ \{|z-w|<\eps\}}}\left(v (z) - v(w)\right)^2 dz dw  \\
    &\le& \frac{1}{N} \bigg(\frac{1}{\eps^{d+2}} \int_{  \substack{Q^2 \cap (\delta E)^2 \\ \{|z-w|<\eps\}}}\left(v_\eps(z)-v_\eps(w)\right)^2dz dw + \frac{1}{\eps^{d+2}} \int_{  \substack{Q^2 \cap (\delta E)^2 \\ \{|z-w|<\eps\}}}\left(v(z)-v(w)\right)^2dz dw\bigg).
\end{eqnarray*}
With such a choice of $k$, and setting $v^\eta_\eps= v^k_\eps$, (\ref{stima v^k_eps}) is estimated by
\begin{eqnarray*}&&
 \frac{1}{\eps^{d+2}} \int_{  \substack{A_3 \cap (\delta E)^2 \\ \{|z-w|<\eps\}}}\left(v_\eps^\eta(z)-v_\eps^\eta(w)\right)^2dz dw  \\ &\le &\frac{3}{N} \bigg(\frac{1}{\eps^{d+2}} \int_{  \substack{Q^2 \cap (\delta E)^2 \\ \{|z-w|<\eps\}}}\left(v_\eps(z)-v_\eps(w)\right)^2dz dw + \frac{1}{\eps^{d+2}} \int_{  \substack{Q^2 \cap (\delta E)^2 \\ \{|z-w|<\eps\}}}\left(v(z)-v(w)\right)^2dz dw\bigg)  \\ 
 &&+3\frac{N^2}{\eta^2} \int_{S_k}   (v_\eps(w)-v(w))^2 dw= \frac{3}{N}( F'_{\eps}(v_{\eps}) +  F'_{\eps}(v) )  +3\frac{N^2}{\eta^2} \int_{S_k}   (v_\eps(w)-v(w))^2 dw \\ &\leq& \frac{C}{N} + 3\frac{N^2}{\eta^2} \int_{S_k}   (v_\eps(w)-v(w))^2 dw.
 \end{eqnarray*}
The estimate on $A_3'$ is the same with the roles of $z,w$ reversed.

As for $A_4$, as already noticed for $(z,w) \in A_4$ we have
$|z-w| \geq \frac{\eta}{N}$,
so that, for $\eta,N$ fixed and $\eps$ small enough (indeed we let it go to $0$ before arguing on $N$), $\frac{\eta}{N}<|z-w|< \eps$ cannot happen simultaneously and thus we simply have  
 $$
 \frac{1}{\eps^{d+2}} \int_{  \substack{A_4 \cap (\delta E)^2 \\ \{|z-w|<\eps\}}}\left(v_\eps^k(z)-v_\eps^k(w)\right)^2dz dw =0 \quad \text{for $\eps$ small enough}.   
 $$
This works the same for $A'_4$ as well. Finally, gathering all the estimates together, we find

$$
\limsup_{\eps \to 0} (F'_\eps(v^\eta_\eps)-F'_\eps(v_\eps )) \leq  \frac{C}{N} +\|\nabla v \|_{L^2(S_\eta+B_\eta(0))}. 
$$
Sending $N \to \infty$, we prove the claim since $\|\nabla v \|_{L^2(S_\eta+B_\eta(0))}=\omega(\eta)=o(1)$ as $\eta \to 0$. 
\end{proof}
Recalling \eqref{measure quotient}, we consider a Lebesgue point $x_0$ and a sequence $\rho=\rho_\e$ for which the limit 
$$
\frac{d\mu}{dx}(x_0)= \lim_{\e \to 0} \frac{\mu_\e(Q_{\rho}(x_0))
  }{\rho^d}.
$$
exists, with $\mu_\e$ is defined as in $(\ref{def of mu_eps})$.

\begin{Proposition}\label{lemma applied to u_eps}
Let $\{u_\e\} \subseteq L^2(\Omega)$, $ u_\e \to u \in H^1(\Omega)$, and let $\eta >0$ be fixed. There exists $\tilde u_\e\to u$ such that 
\begin{itemize}
    \item $\tilde u_\e(x)=u_\e (x_0) + \nabla u (x_0)\cdot (x-x_0), \  \ \hbox{for all } x \in Q_\rho(x_0) \setminus Q_{\rho(1-2\eta)}(x_0)$;
    \item $\tilde u_\e(x)=u_\e (x) \ \ \hbox{for all }x \in Q_{\rho(1-4\eta)}(x_0);$
    \item denoting for simplicity 
    $$F'_\e(u)= \frac{1}{\rho^d\eps^{d+2}} \int_{  \substack{(Q_\rho(x_0) \cap \delta E)^2 \\ \{|x-y|<\eps\}}}(u(x)-u(y))^2dxdy,
    $$ 
it holds
    $$
    \limsup_{\e \to 0} (F'_\e(u_\e)-F'_\e(\tilde u_\e)) \leq o(1)
    $$
    as $\eta \to 0$.
\end{itemize}
\end{Proposition}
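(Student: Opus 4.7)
The plan is to adapt the proof of Lemma \ref{match boundary data lemma} almost verbatim, now interpolating between $u_\e$ and the first-order Taylor polynomial $\ell(x) := u(x_0) + \nabla u(x_0)\cdot(x - x_0)$ rather than between $v_\e$ and $v$, and working on $Q_\rho(x_0)$ in place of $Q$. Concretely, I fix $N\in\mathbb{N}$, introduce $N$ nested cubes $Q_k^N$ interpolating between $Q_{\rho(1-4\eta)}(x_0)$ and $Q_{\rho(1-2\eta)}(x_0)$, choose cutoff functions $\pphi_k$ equal to $1$ on the $k$-th inner cube, vanishing outside the $(k-1)$-th outer one, and with $|\nabla\pphi_k|\le N/(\rho\eta)$, and set $\tilde u_\e^k := \pphi_k u_\e + (1-\pphi_k)\ell$. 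Then decompose $Q_\rho(x_0)\times Q_\rho(x_0)$ into the analogues of the sets $A_1,\dots,A_4'$ and proceed region by region.

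On $A_1$ one has $\tilde u_\e^k=u_\e$ on both points, contributing at most $F'_\e(u_\e)$. On $A_2$ one has $\tilde u_\e^k=\ell$ on both points, and since $\ell(z)-\ell(w)=\nabla u(x_0)\cdot(z-w)$ together with $|z-w|<\e$ one gets a contribution bounded by $C|\nabla u(x_0)|^2|S_\eta|/\rho^d\le C|\nabla u(x_0)|^2\eta$, where $S_\eta$ is the boundary annular strip of width $\sim\rho\eta$; this is $o(1)$ as $\eta\to 0$. On $A_3, A_3'$ one expands
$$
\tilde u_\e^k(z)-\tilde u_\e^k(w)=\pphi_k(z)(u_\e(z)-u_\e(w))+(1-\pphi_k(z))(\ell(z)-\ell(w))+(\pphi_k(z)-\pphi_k(w))(u_\e(w)-\ell(w))
$$
and applies $(a+b+c)^2\le 3(a^2+b^2+c^2)$. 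The first two summands, summed over $k=1,\ldots,N$ and minimized in $k$, contribute $C/N$ times quantities uniformly bounded in $\e$, exactly as in the quoted lemma. The third summand, using $|\pphi_k(z)-\pphi_k(w)|\le (N/(\rho\eta))|z-w|$ and $|z-w|<\e$, is controlled by $CN^2\rho^{-(d+2)}\eta^{-2}\int_{Q_\rho(x_0)}|u_\e-\ell|^2\,dw$. The cross pieces $A_4, A_4'$ are empty once $\e<\rho\eta/N$.

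The only genuinely new step beyond Lemma \ref{match boundary data lemma} is showing that this last integral is negligible as $\e\to 0$. Split $|u_\e-\ell|^2\le 2|u_\e-u|^2 + 2|u-\ell|^2$. For the second piece, $\rho^{-(d+2)}\int_{Q_\rho(x_0)}|u-\ell|^2\,dx\to 0$ as $\rho\to 0$ because $x_0$ is a Lebesgue point of $\nabla u$ and Sobolev functions are $L^2$-differentiable at a.e.\ point. For the first piece, Remark \ref{l2conv} lets us assume $u_\e\to u$ strongly in $L^2(\Omega)$, so for each fixed $\rho$ one has $\rho^{-(d+2)}\int_{Q_\rho(x_0)}|u_\e-u|^2\,dx\to 0$ as $\e\to 0$; a diagonal selection of $\rho_\e\to 0$ still compatible with \eqref{measure quotient} makes both vanishings hold simultaneously along $\e\to 0$. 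Sending first $\e\to 0$, then $N\to\infty$, then $\eta\to 0$ yields the claim. The main (but routine) obstacle is precisely this diagonal matching of three rates—the Lebesgue-point decay at $x_0$, the $L^2$-rate of $u_\e\to u$, and the shrinking of $\rho_\e$—which is standard in Fonseca M\"uller-type arguments.
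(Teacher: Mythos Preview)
Your argument is correct and is in substance the same as the paper's; the only structural difference is that the paper does not redo the De Giorgi slicing on $Q_\rho(x_0)$ but instead rescales to the unit cube via the blow-up quotient
\[
v_\e(z)=\frac{u_\e(x_0+\rho z)-u_\e(x_0)}{\rho},\qquad z\in Q,
\]
arranges (by exactly the diagonal selection of $\rho_\e$ you describe) that $v_\e\to \nabla u(x_0)\cdot z$ in $L^2(Q)$, applies Lemma~\ref{match boundary data lemma} verbatim to $v_\e$ with $v(z)=\nabla u(x_0)\cdot z$, and then pulls back by setting $\tilde u_\e(x_0+\rho z)=\rho\, v_\e^\eta(z)+u_\e(x_0)$. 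Your direct computation on $Q_\rho(x_0)$ just unfolds this change of variables: the $A_2$ estimate becoming $C|\nabla u(x_0)|^2\eta$ (rather than $\|\nabla v\|_{L^2(S_\eta)}$, which here is the same thing since $v$ is linear), and the crucial smallness of $\rho^{-(d+2)}\!\int_{Q_\rho}|u_\e-\ell|^2$ is exactly the $L^2(Q)$-convergence $v_\e\to v$ that the paper invokes. The paper's route is a bit more economical since it reuses Lemma~\ref{match boundary data lemma} as a black box; yours makes the scaling explicit. A cosmetic point: the statement and the paper's proof pin the boundary value to $u_\e(x_0)+\nabla u(x_0)\cdot(x-x_0)$ while you use $u(x_0)$ as the constant; since $F'_\e$ sees only differences this is immaterial.
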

\begin{proof} The proof of this proposition is obtained by applying Lemma \ref{match boundary data lemma} to the difference quotient $v_\e$ defined by 
$$
v_\e(z) :=\frac{u_\e (x_0 + \rho z)-u_\e(x_0)}{\rho}, \quad z \in Q.
$$
We may choose $\rho=\rho_\e$ such that also $
v_\e \to \nabla u(x_0)\cdot z \ \text{in } L^2(Q)$.
We rewrite the quotient $\frac{\mu_\e(Q_{\rho}(x_0))
  }{\rho^d}$ in terms of the functions $v_\eps$:
\begin{eqnarray*}
\frac{\mu_\e(Q_{\rho}(x_0))
  }{\rho^d}&= &\frac{1}{\rho^d\eps^{d+2}} \int_{  \substack{(Q_{\rho}(x_0)\times \Omega) \cap (\delta E)^2 \\ \{|x-y|<\eps\}}}(u_{\eps}(x)-u_{\eps}(y))^2dxdy \\
  &\geq& \frac{1}{\rho^d\eps^{d+2}} \int_{  \substack{(Q_{\rho}(x_0)\times Q_{\rho}(x_0)) \cap (\delta E)^2 \\ \{|x-y|<\eps\}}}(u_{\eps}(x)-u_{\eps}(y))^2dxdy\\&=&
    \frac{\rho^d}{\eps^{d+2}} \int_{  \substack{(Q\times Q) \cap (\frac{\delta}{\rho} E)^2 \\ \{|z-w|<\frac{\eps}{\rho}\}}}(u_{\eps}(x_0 + \rho z)-u_{\eps}(x_0 + \rho w ))^2dz dw\\&=&
     \frac{\rho^{d+2}}{\eps^{d+2}} \int_{  \substack{(Q\times Q) \cap (\frac{\delta}{\rho} E)^2 \\ \{|z-w|<\frac{\eps}{\rho}\}}}\left(\frac{u_{\eps}(x_0 + \rho z)-u_{\eps}(x_0 + \rho w )}{\rho}\right)^2dz dw \\&=&
     \frac{\rho^{d+2}}{\eps^{d+2}} \int_{  \substack{(Q\times Q) \cap (\frac{\delta}{\rho} E)^2 \\ \{|z-w|<\frac{\eps}{\rho}\}}}\left(v_\eps(z)-v_\eps(w)\right)^2dz dw.
    \end{eqnarray*}
By Lemma \ref{match boundary data lemma}, there exists $v_\e^\eta \to v$ in $L^2(Q)$ which satisfies the properties stated therein, in particular $v_\e^\eta(z)= \nabla u(x_0)\cdot z$ in the set $ \{ z \in Q: \text{dist}(z,\partial Q) < \eta \}$.
By setting $$\tilde u_\e(x_0 + \rho z) := \rho v_\e^\eta (z) + u_\e(x_0) \ \hbox{ for all z } \in Q,$$
the function $\tilde u_\e$ satisfies the properties in the statement; in particular, $\tilde u_\e(x)=u_\e (x_0) + \nabla u (x_0)\cdot (x-x_0)$ in the set $ \{x \in Q_\rho(x_0): \text{dist}(z,\partial Q_\rho(x_0))< \rho \eta \} = Q_\rho(x_0) \setminus Q_{\rho(1-2\eta)}(x_0)$, and by the same change of variable of above to write the functional back in terms of $\tilde u_\e$ it holds 
\begin{multline*}
\limsup_{\e \to 0} \Big( \frac{1}{\rho^d\eps^{d+2}} \int_{  \substack{(Q_{\rho}(x_0) \cap\delta E)^2 \\ \{|x-y|<\eps\}}}(u_{\eps}(x)-u_{\eps}(y))^2dxdy  \\ -  \frac{1}{\rho^d\eps^{d+2}} \int_{  \substack{(Q_{\rho}(x_0) \cap\delta E)^2 \\ \{|x-y|<\eps\}}}(\tilde u_{\eps}(x)-\tilde u_{\eps}(y))^2dxdy \Big) \leq o(1),
\end{multline*}
as $\eta \to 0$.
\end{proof}
To ease the notation we still call $u_\e$ the sequence which satisfies $u_\e(x)=u_\e(x_0)+\nabla u (x_0) \cdot (x-x_0)$ for $x \in Q_{\rho}(x_0) \setminus Q_{\rho(1 - 2\eta)}(x_0) $ of Proposition \ref{lemma applied to u_eps}. 

Now we would like to have $\frac{\rho}{\delta} \in \mathbb{N} $, so we take a slightly smaller cube than $Q_\rho(x_0)$ that gives a negligible error at the limit.
Set $N_\e  := \left \lfloor{\frac{1}{2}(\frac{\rho}{\delta}-1) }\right \rfloor$, $l_\e  := (1+2N_\e)\delta $ and
\begin{align*}
 Q_\e := Q_{l_\e} (x_0)=\bigcup _{k \in \{-N_\e, ..., 0,...,N_\e\}^d} Q_{\delta} (x_0 + \delta k), \qquad |Q_\e|=l_\e^d,
\end{align*} 
In the hypothesis $\eps \gg \delta$ and $\eta \sim \frac{\eps}{\rho}$, we have  $Q_{\rho(1- \eta)} (x_0) \subseteq Q_\e \subseteq Q_{\rho}(x_0) $. In particular, we can extend $u_\e - u_\e(x_0) -\nabla u(x_0)\cdot (\cdot - x_0)$ periodically of period $l_\e$ to the whole $\R^d$.

Note that for all $x \in Q_\e \subseteq Q_{\rho}(x_0)$ and $y \in B(x;\e)$, choosing $\eta= 2 \frac{\e}{\rho}$,
\begin{align*}
|y_i - (x_0)_i| \le |x_i - (x_0)_i| +|x-y| \le \frac{\rho}{2}+ \e = \frac{\rho(1 +\eta)}{2}  \  \hbox{ for all }i \in \{1,\ldots,d\};
\end{align*}
that is,  $y \in Q_{\rho(1+ \eta)}(x_0)$. 
Thus we have 
$$
(Q_\e \times \R^d) \cap \{|x-y|<\eps \} \subset (Q_\e \times Q_{\rho(1+ \eta)}(x_0)) \cap \{|x-y|<\eps \}.
$$
We further split the right-hand side in
\begin{equation}\label{splitting of Q eps x Q eps}
((Q_\e \times Q_\e) \cap \{|x-y|<\eps \})  \cup  (  (Q_\e \times (Q_{\rho(1+ \eta)}(x_0) \setminus Q_\e)) \cap \{|x-y|<\eps \}   ).
\end{equation}
For $y \in Q_{\rho(1+ \eta)}(x_0) \setminus Q_\eps \subset Q_{\rho(1+ \eta)}(x_0) \setminus Q_{\rho(1- \eta)}(x_0) $, and $x \in Q_\e \cap B(y,\e)$ we have
$$
\frac{\rho(1-\eta)}{2} < |y_i - (x_0)_i| \le |x_i - (x_0)_i| + |x-y| < |x_i - (x_0)_i| + \e = |x_i - (x_0)_i| + \frac{\eta \rho}{2} 
$$
 for some index $i\in\{1,\ldots,d\}$;
that is, $x \in Q_\e \setminus Q_{\rho(1-2\eta)}(x_0) \subset Q_\rho(x_0) \setminus Q_{\rho(1-2\eta)}(x_0) $.  By Proposition \ref{lemma applied to u_eps}, it follows that for $(x,y) \in  (Q_\e \times (Q_{\rho(1+ \eta)}(x_0) \setminus Q_\e)) \cap \{|x-y|<\eps \}  $, we have simultaneously
$$
u_\e(x)=u(x_0) + \nabla u(x_0)\cdot (x-x_0),
$$
$$
u_\e(y)=u(x_0) + \nabla u(x_0)\cdot (y-x_0),
$$
(the latter by periodicity in $y$). Therefore, by (\ref{splitting of Q eps x Q eps}), we can estimate
\begin{align*}
    \frac{1}{\rho^d\eps^{d+2}} &\int_{  \substack{(Q_\e \times \R^d) \cap (\delta E)^2 \\ \{|x-y|<\eps\}}}(u_{\eps}(x)-u_{\eps}(y))^2dxdy
    \leq
    \frac{1}{\rho^d\eps^{d+2}} \int_{  \substack{(Q_{\e} \times Q_\eps) \cap (\delta E)^2 \\ \{|x-y|<\eps\}}}(u_{\eps}(x)-u_{\eps}(y))^2dxdy \\ &+
    \frac{1}{\rho^d\eps^{d+2}} \int_{  \substack{(Q_\e \times (Q_{\rho(1+ \eta)}(x_0) \setminus Q_\e)) \cap (\delta E)^2 \\ \{|x-y|<\eps\}}}(\nabla u (x_0) \cdot (x-y))^2dxdy \\ 
    &\leq \frac{1}{\rho^d\eps^{d+2}}\int_{  \substack{(Q_{\e }\cap \delta E)^2 \\ \{|x-y|<\eps\}}}(u_{\eps}(x)-u_{\eps}(y))^2dxdy +\frac{|\nabla u(x_0)|^2 \eps^2}{\rho^d \e^{d+2}} \int_{Q_{\rho(1+ \eta)}(x_0) \setminus Q_\e } \left(\int_{Q_\e \cap B(y,\eps)} dx \right) dy
    \\ 
    &\le
   \frac{1}{\rho^d\eps^{d+2}} \int_{  \substack{(Q_{\e }\cap \delta E)^2 \\ \{|x-y|<\eps\}}}(u_{\eps}(x)-u_{\eps}(y))^2dxdy + \frac{|\nabla u(x_0)|^2 \eps^2}{\rho^d \e^{d+2}} \, |Q_{\rho(1+\eta)} \setminus Q_{\rho(1-\eta)} | \e^d \\
    &\le \frac{1}{\rho^d\eps^{d+2}} \int_{  \substack{(Q_{\e }\cap \delta E)^2 \\ \{|x-y|<\eps\}}}(u_{\eps}(x)-u_{\eps}(y))^2dxdy + \frac{|\nabla u(x_0)|^2}{\rho^d }4d \eps \rho^{d-1} \\
   & = \frac{1}{\rho^d\eps^{d+2}} \int_{  \substack{(Q_{\e }\cap \delta E)^2 \\ \{|x-y|<\eps\}}}(u_{\eps}(x)-u_{\eps}(y))^2dxdy + C \frac{\e}{\rho}
\end{align*}
since $|Q_{\rho(1+\eta)} \setminus Q_\e  | \leq |Q_{\rho(1+\eta)} \setminus Q_{\rho(1-\eta)}|= [\rho(1+\eta)]^d - [\rho(1-\eta)]^d  \leq 2 (\eta\rho) d \rho^{d-1}=4d\eps \rho^{d-1}$. We can read the estimate above as
\begin{multline*}
  \frac{1}{\rho^d\eps^{d+2}} \int_{  \substack{(Q_{\e }\cap \delta E)^2 \\ \{|x-y|<\eps\}}}(u_{\eps}(x)-u_{\eps}(y))^2dxdy \geq
   \frac{1}{\rho^d\eps^{d+2}} \int_{  \substack{(Q_\e \times \R^d) \cap (\delta E)^2 \\ \{|x-y|<\eps\}}}(u_{\eps}(x)-u_{\eps}(y))^2dxdy - C \frac{\e}{\rho} \end{multline*}
and $C\frac{\e}{\rho}=o(1)$ as $\e \to 0$.

Set $w_\e(z) := u_\e(x_0 + l_\e z)-\nabla u (x_0) \cdot l_\e z$ for all $z \in Q$, which is $\mathbb{Z}^d$-periodic by our assumption. For simplicity denote $M_\e=1+2N_\e$, so that $l_\e=(1+2N_\e)\delta=M_\e \delta$ and $|Q_\e|^d=l_\e^d=M_\e^d\delta^d$. We have
\begin{eqnarray*}
&& \frac{\mu_\e(Q_{\rho}(x_0)) }{\rho^d}= \frac{1}{\rho^d\eps^{d+2}} \int_{  \substack{(Q_{\rho}(x_0)\times \Omega) \cap (\delta E)^2 \\ \{|x-y|<\eps\}}}(u_{\eps}(x)-u_{\eps}(y))^2dxdy \\ 
&\ge&
  \frac{1}{\rho^d\eps^{d+2}} \int_{  \substack{(Q_\e\cap \delta E)^2 \\ \{|x-y|<\eps\}}}(u_{\eps}(x)-u_{\eps}(y))^2dxdy\\&  \ge&
    \frac{1}{\rho^d\eps^{d+2}} \int_{  \substack{(Q_{\e}\times \R^d)\cap (\delta E)^2 \\ \{|x-y|<\eps\}}}(u_{\eps}(x)-u_{\eps}(y))^2dxdy +o(1) 
    \\ &=&
  \frac{1}{\rho^d\eps^{d+2}} \int_{  \substack{(Q_{\e} \times \R^d)\cap (\delta E)^2 \\ \{|x-y|<\eps\}}}\Big(w_\e\Big (\frac{x-x_0}{l_\e }\Big)-w_\e\Big (\frac{y-x_0}{l_\e }\Big) + \nabla u(x_0)\cdot(x- y )\Big)^2dxdy +o(1)  \\ &\ge&
 \frac{1}{\rho^d\eps^{d+2}}  \inf_{w \, \Z^d\text{-per.}} \bigg\{\int_{  \substack{(Q_\e \times \R^d)\cap (\delta E)^2 \\ \{|x-y|<\eps\}}}\Big(w\Big (\frac{x-x_0}{l_\e }\Big)-w\Big (\frac{y-x_0}{l_\e }\Big) + \nabla u(x_0)\cdot(x- y )\Big)^2dxdy \bigg\} \label{last line mu eps}.
\end{eqnarray*}
We work on the infimum by change of variables, writing
\begin{eqnarray*}
&& \inf_{w \, \Z^d\text{-per.}} \int_{  \substack{(Q_\e \times \R^d)\cap (\delta E)^2 \\ \{|x-y|<\eps\}}}\Big(w\Big (\frac{x-x_0}{l_\e }\Big)-w\Big (\frac{y-x_0}{l_\e }\Big) + \nabla u(x_0)\cdot(x- y )\Big)^2dxdy   \\
&=&  l_\e^{2d} \inf_{w \, \Z^d\text{-per.}} \int_{  \substack{\left(Q_1\left(\frac{x_0}{l_\e} \right) \times \R^d \right)\cap \left( \frac{E}{M_\e}\right)^2 \\ \{|x-y|<\frac{\eps}{l_\e}\}}}\Big(w\Big (x-\frac{x_0}{l_\e }\Big)-w\Big (y-\frac{x_0}{l_\e }\Big)  + \nabla u(x_0)\cdot l_\e(x- y )\Big)^2dxdy \\
&=& l_\e^{2d+2} \inf_{w \, \Z^d\text{-per.}} \int_{  \substack{\left(Q_1\left(\frac{x_0}{l_\e} \right) \times \R^d \right)\cap \left( \frac{E}{M_\e}\right)^2 \\ \{|x-y|<\frac{\eps}{l_\e}\}}}\left(w\left(x\right)-w\left (y\right)  + \nabla u(x_0)\cdot (x- y )\right)^2dxdy.
\end{eqnarray*}
The last equality comes from the fact that, for each $w$ 1-periodic, the transformation $\tilde w(x)= l_\eps w\big(x+\frac{x_0}{l_\e}\big)$ (which is also $1$-periodic) gives the same integral, and the infimum is the same by the one-to-one correspondence. Now we use the periodicity of $E$ to get rid of the center $\frac{x_0}{l_\e}$, indeed it holds
\begin{eqnarray*}&&
\int_{  \substack{\left(Q_1\left(\frac{x_0}{l_\e} \right) \times \R^d \right)\cap \left( \frac{E}{M_\e}\right)^2 \\ \{|x-y|<\frac{\eps}{l_\e}\}}}\left(w\left(x\right)-w\left (y\right)  + \nabla u(x_0)\cdot (x- y )\right)^2dxdy \\ &=&
\int_{  \substack{\left(Q_1\left(\frac{x_0}{l_\e} \right) \times \R^d \right) \\ \{|x-y|<\frac{\eps}{l_\e}\}}}\chi_{ \left(\frac{E}{M_\e} \times \frac{E}{M_\e}\right) }(x,y)\left(w\left(x\right)-w\left (y\right)  + \nabla u(x_0)\cdot (x- y )\right)^2dxdy \\
&=&\int_{  \substack{\left(Q_1\left(0 \right) \times \R^d \right) \\ \{|x-y|<\frac{\eps}{l_\e}\}}}\chi_{ \left(\frac{E}{M_\e} \times \frac{E}{M_\e}\right) }(x,y)\left(w\left(x\right)-w\left (y\right)  + \nabla u(x_0)\cdot (x- y )\right)^2dxdy,
\end{eqnarray*}
given that the integrand is jointly $1$-periodic in both $x$ and $y$, thanks to the $1$-periodicity of $E$ (and in particular of $\frac{E}{M_\e}$), of $w$ and that $y$ runs in $\R^d$ (so that, when $x$ and $y$ is changed with $x+k$ and $y+k'$, the change of variable $y'=y+k'-k$ works). 

Let $w$ be $\Z^d$-periodic and define a $\frac{1}{M_\e}\Z^d$-periodic function
\begin{align*}
    \tilde {w}(x)  :=  \frac{1}{M_\e^d}\sum_{k \in \{ -N_\e,...,0,...,N_\e\}^d } w\Big(x+ \frac{k}{M_\e}\Big)
\end{align*}
Via a standard convexity argument, calling $\mathcal{I}_\eps=\{ -N_\e,...,0,...,N_\e\}^d$,
\begin{eqnarray*}&&
\int_{  \substack{\big(Q_\frac{1}{M_\e} \times \R^d \big) \cap \left(\frac{E}{M_\e} \right)^2 \\ \{|x-y|<\frac{\eps}{l_\e}\}}} \left(\tilde w(x)-\tilde w(y)  + \nabla u(x_0)\cdot (x- y )\right)^2dxdy \\
&=& \frac{1}{M_\e^d} \int_{  \substack{\left(Q_1 \times \R^d \right) \cap \left(\frac{E}{M_\e} \right)^2 \\ \{|x-y|<\frac{\eps}{l_\e}\}}} \left(\tilde w(x)-\tilde w(y)  + \nabla u(x_0)\cdot (x- y )\right)^2dxdy \\
&=& \frac{1}{M_\e^d} \int_{  \substack{\left(Q_1 \times \R^d \right) \cap \left(\frac{E}{M_\e} \right)^2 \\ \{|x-y|<\frac{\eps}{l_\e}\}}}
\Big( \frac{1}{M_\e^d} \sum_{k \in \mathcal{I}_\eps} \Big( w\Big(x+ \frac{k}{M_\e}\Big) - w\Big(y+ \frac{k}{M_\e}\Big) + \nabla u(x_0)\cdot(x-y) \Big)  \Big)^2dxdy \\
&\leq& \frac{1}{M_\e^{2d}} \sum_{k \in \mathcal{I}_\eps}\int_{  \substack{\left(Q_1 \times \R^d \right) \cap \left(\frac{E}{M_\e} \right)^2 \\ \{|x-y|<\frac{\eps}{l_\e}\}}} \Big( w\Big(x+ \frac{k}{M_\e}\Big) - w\Big(y+ \frac{k}{M_\e}\Big) + \nabla u(x_0)\cdot(x-y) \Big)^2dxdy \\&=& \frac{1}{M_\e^{d}} \int_{  \substack{\left(Q_1 \times \R^d \right) \cap \left(\frac{E}{M_\e} \right)^2 \\ \{|x-y|<\frac{\eps}{l_\e}\}}} \left( w\left(x\right) - w\left(y\right) + \nabla u(x_0)\cdot(x-y) \right)^2dxdy.
\end{eqnarray*}
Thus by taking the infimum for every $\tilde w$ $\frac{1}{M_\e}\Z^d$-periodic on the left-hand side of the last inequality, by the arbitrariness of $w$ $1$-periodic, in (\ref{last line mu eps}) we get 
\begin{eqnarray*}
&&\frac{\mu_\e(Q_\rho(x_0))}{\rho^d}\\
&\geq& \frac{l_\e^{2d+2}}{\rho^d \e^{d+2}}M_\e^{d} \inf_{w \, \frac{1}{M_\e}\Z^d\text{-per.}} \int_{  \substack{\left(Q_\frac{1}{M_\e} \times \R^d \right) \cap \left(\frac{E}{M_\e} \right)^2 \\ \{|x-y|<\frac{\eps}{l_\e}\}}} \left( w(x)-w(y)  + \nabla u(x_0)\cdot (x- y )\right)^2dxdy \\
&=&\underbrace{\frac{l_\e^d}{\rho^d}}_{\sim 1 \text{ as } \eps \to 0}\frac{\delta^{d+2}}{\eps^{d+2}}\inf_{w \, \frac{1}{M_\e}\Z^d\text{-per.}} \int_{  \substack{\left(Q_\frac{1}{M_\e} \times \R^d \right) \cap \left(\frac{E}{M_\e} \right)^2 \\ \{|x-y|<\frac{\eps}{l_\e}\}}}M_\e^{2d+2} \left( w(x)-w(y)  + \nabla u(x_0)\cdot (x- y )\right)^2dxdy \\
&=&\frac{\delta^{d+2}}{\eps^{d+2}}\inf_{w \, \frac{1}{M_\e}\Z^d\text{-per.}} \int_{  \substack{\left(Q_1 \times \R^d \right) \cap (E)^2 \\ \{|x-y|<\frac{\eps}{\delta}\}}} \left( w\left(\frac{x}{M_\e}\right)-w\left(\frac{y}{M_\e}\right)  + \nabla u(x_0)\cdot (x- y )\right)^2dxdy 
\\ &\geq& \frac{\delta^{d+2}}{\eps^{d+2}}\inf_{w \, \Z^d\text{-per.}} \int_{  \substack{\left(Q_1 \times \R^d \right) \cap (E)^2 \\ \{|x-y|<\frac{\eps}{\delta}\}}} \left( w(x)-w(y)  + \nabla u(x_0)\cdot (x- y )\right)^2dxdy, 
\end{eqnarray*}
where the second equality holds from a change of variable and the same value of the inf with $M_\e w$, and the last inequality comes from the fact that $x \mapsto w\big( \frac{x}{M_\e} \big)$ is $1$ periodic if $w$ is $\frac{1}{M_\e}$ periodic, and the infimum decreases. Finally, with fixed $\eta >0$, for $\e$ small enough we have 
$\eta >\frac{\delta}{\eps} \text{diam}(K)$,
and we set $Z_\e=\{ k \in \Z^d: \frac{\delta}{\eps}|k| < 1-\eta\} $. Note that, when $k \in Z_\e$  we have 
$$
|x-y-k| \leq \text{diam}(K)+|k| < \frac{\eps}{\delta} - \frac{\eps}{\delta}\eta +\text{diam(K)} < \frac{\eps}{\delta}
$$
so that $(x,y) \in (K \times E) \cap \{|x-y|< \frac{\eps}{\delta} \}=(K \times (K+k)) \cap \{|x-y|< \frac{\eps}{\delta} \}$. By a double Jensen's inequality we have 
\begin{align*}
 &\inf_{w \, \Z^d\text{-per.}} \int_{  \substack{(Q_1 \times \R^d)\cap (E)^2 \\ \{|x-y|< \frac{\eps}{\delta}\}}}\left(w(x)-w(y) \right.
+ \nabla u(x_0)\cdot(x- y ))^2dxdy\\&=
\inf_{w \, \Z^d\text{-per.}} \int_{ (K \times E) \cap \{ |x-y|<\frac{\eps}{\delta}\}}\left(w\left (x\right)-w\left (y\right)
+ \nabla u(x_0)\cdot(x- y )\right)^2dxdy \\ &\ge
\inf_{w \, \Z^d\text{-per.}} \sum_{k \in Z_\eps}\int_{ K \times (K+k) }\left(w\left (x\right)-w\left (y\right)
+ \nabla u(x_0)\cdot(x- y )\right)^2dxdy \\ &\ge
\inf_{w \, \Z^d\text{-per.}} \sum_{k \in Z_\eps}|K|^{2}\left(\frac{1}{|K|^2}\int_{ K \times (K+k) }w\left (x\right)-w\left (y\right)
+ \nabla u(x_0)\cdot(x- y )dxdy\right)^2\\ &=
\sum_{k \in Z_\eps}|K|^{2}\left(\frac{1}{|K|^2}\int_{ K \times K }\nabla u(x_0)\cdot k \, dxdy\right)^2 =
|K^2| \sum_{k \in Z_\eps} \left( \nabla u(x_0)\cdot k\right)^2,
\end{align*}
which is independent of $w$ $1$-periodic. Thus
\begin{align*}
\frac{\mu_\e(Q_\rho(x_0))}{\rho^d} \geq \frac{\delta^{d+2}}{\eps^{d+2}}|K^2| \sum_{k \in Z_\eps} \left( \nabla u(x_0)\cdot k\right)^2    
\end{align*}
and
\begin{align*}
\liminf_{\eps \to 0}  \frac{\mu_\e(Q_{\rho}(x_0)) }{\rho^d} \geq |K^2| \liminf_{\eps \to 0} \sum_{k \in Z_\eps}\left(\frac{\delta}{\eps}\right)^d  \left( \nabla u(x_0)\cdot \frac{\delta}{\eps}k\right)^2=   |K|^2\int_{B_{1-\eta}} |\nabla u(x_0)\cdot \xi|^2 d\xi.
\end{align*}
Using the lower semicontinuity of the mass,
    $$
    \liminf_{\eps \to 0} F_{\eps}(u_{\eps}) \geq \mu(\Omega) \geq \int_{\Omega} \frac{d \mu}{dx} dx,
    $$    
we get
    $$
    \liminf_{\eps \to 0} F_{\eps}(u_{\eps}) \geq |K|^2 \int_{\Omega}\int_{B_1} |\nabla u(x) \cdot \xi|^2 d\xi dx= |K|^2 C(d)\int_{\Omega} |\nabla u (x)|^2 dx,
$$
with $C_d := \frac{1}{d}\int_{B_1}|\xi|^2 d\xi$ as in \cite{bourgainbrezis} (for us $\phi(z)$ was the characteristic function of the unit ball).

\subsection{Limsup inequality}\label{subsection limsup inequality}
Let $u \in C^2(\overline \Omega)$ and let $\eta > 0$ be fixed. Let $\Omega \Subset \Omega'$ and set $Z_\delta  := \left\{k \in \mathbb{Z}^d : Q_\delta^k\cap \Omega \neq \emptyset\right\}$.
For $\delta \ll 1$ we have $\bigcup_{k \in Z_{\delta}}Q_{\delta}^k \subseteq \{x \in \R^d: \text{dist}(x, \Omega) \le 2\delta\} \subseteq \Omega'$. Moreover, without loss of generality we can assume $u \in C^2(\Omega')$.

By the regularity assumption on $u$ and a weighted Cauchy-Schwarz inequality:
\begin{align*}
    |u(x)-u(y)|^2&=|\nabla u (x) \cdot (x-y) + R(x,y)|^2 \\
    &\le (1+\eta) |\nabla u (x)\cdot (x-y)|^2 + \Big(1+\frac{1}{\eta}\Big)|R(x,y)|^2, 
\end{align*}
with $|R(x,y)|\le C|x-y|^2.$ 
Moreover, in the assumption $\eps \gg \delta$, it also holds
$$
|k\delta -k' \delta| \le |k\delta -x| + |k'\delta -y| + \eps  \le \eps + 2\delta \le \eps(1+\eta)
$$
for all $k, k' \in Z_{\delta}$ and $x, y \in  Q_{\delta}^k \times Q_{\delta}^{k'} $ such that $|x-y|< \eps$, which implies
\begin{eqnarray*}
F_\e(u) &\le& (1+\eta) \frac{1}{\e^{d+2}} \int_{\substack{(\Omega \cap \delta E)^2 \\ \{|x-y|<\e\}} }|\nabla u (x) \cdot (x-y)|^2 dx dy + C_\eta \frac{1}{\e^{d+2}} \int_{\substack{(\Omega \cap \delta E)^2 \\ \{|x-y|<\e\}}} |x-y|^4 dxdy\\
&\le& (1+\eta)\frac{1}{\eps^{d+2}}\sum_{k,k' \in Z_{\delta}} \int_{\substack{(Q_{\delta}^k \times Q_{\delta}^{k'}) \cap (\delta E)^2 \\ \{|x-y|<\e\}}}
 |\nabla u (x) \cdot (x-y)|^2 dx dy + C_\eta\e^2 \\
&\le& (1+\eta)\frac{1}{\eps^{d+2}}\sum_{\substack{k,k' \in Z_{\delta} \\ \delta|k-k'| < (1+\eta) \e} } \int_{(Q_{\delta}^k \times Q_{\delta}^{k'}) \cap (\delta E)^2 }
 |\nabla u (x) \cdot (x-y)|^2 dx dy + C_\eta \e^2.
\end{eqnarray*}
Note that for all $k,k' \in Z_{\delta}$ and for all $x= \delta k + z$ and $y=\delta k' + w$, with $z,w \in \delta Q$,
 \begin{eqnarray*}
 |\nabla u(x) \cdot (x-y)| &=& | \nabla u(x)\cdot (\delta k -\delta k') + \nabla u(x) \cdot (z-w)|\\
  &\leq& | \nabla u(x)\cdot (\delta k -\delta k')| + \|\nabla u \|_{\infty} \delta 
 \\ &\leq&  | \nabla u(k\delta )\cdot (\delta k -\delta k')| +\|D^2 u \|_{\infty}  \delta +\|\nabla u \|_{\infty} \delta\\
 & \leq& | \nabla u(k\delta )\cdot (\delta k -\delta k')| +C \delta, 
 \end{eqnarray*}
 so that $ |\nabla u(x) \cdot (x-y)|^2 \leq (1+\eta)| \nabla u(k\delta )\cdot (\delta k -\delta k')|^2 + C_\eta \delta^2$. 
 Therefore we have
\begin{eqnarray*}
 &&  \frac{1}{\eps^{d+2}}\sum_{\substack{k,k' \in Z_{\delta} \\ \delta|k-k'| < (1+\eta) \e} } \int_{(Q_{\delta}^k \times Q_{\delta}^{k'}) \cap (\delta E)^2 }
 |\nabla u (x) \cdot (x-y)|^2 dx dy \\
 &\le& (1+\eta) \frac{1}{\eps^{d+2}} \sum_{\substack{k,k' \in Z_{\delta} \\ \delta|k-k'| < (1+\eta) \e} } \int_{(Q_{\delta}^k \times Q_{\delta}^{k'}) \cap (\delta E)^2 }
 |\nabla u (k\delta) \cdot (k\delta-k'\delta)|^2 dx dy  
\\
 &&+ C_\eta \sum_{\substack{k,k' \in Z_{\delta} \\ \delta|k-k'| < (1+\eta) \e} }  \frac{1}{\eps^{d+2}}\delta^{2d}\delta^2 \\
& =&(1+\eta) \frac{1}{\eps^{d+2}} \sum_{\substack{k,k' \in Z_{\delta} \\ \delta|k-k'| < (1+\eta) \e} } \int_{(Q_{\delta}^k \times Q_{\delta}^{k'}) \cap (\delta E)^2 }
 |\nabla u (k\delta) \cdot (k\delta-k'\delta)|^2 dx dy  
 \\
&& + C_\eta \sum_{k \in Z_{\delta}} \sum_{|j|< \frac{(1+\eta)\e}{\delta}} \frac{1}{\eps^{d+2}}\delta^{2d}\delta^2 
 \\
 &\le& (1+\eta)\frac{1}{\eps^{d+2}} \sum_{\substack{k,k' \in Z_{\delta} \\ \delta|k-k'| < (1+\eta) \e} } \int_{(Q_{\delta}^k \times Q_{\delta}^{k'}) \cap (\delta E)^2 }
 |\nabla u (k\delta) \cdot (k\delta-k'\delta)|^2 dx dy 
 \\
 &&+C_\eta \frac{1}{\delta^d}\left(\frac{\e}{\delta}\right)^d \frac{1}{\eps^{d+2}}\delta^{2d}\delta^2 
 \\
 &\le& (1+\eta) \frac{1}{\eps^{d+2}} \sum_{\substack{k,k' \in Z_{\delta} \\ \delta|k-k'| < (1+\eta) \e} } |(Q_{\delta}^k \times Q_{\delta}^{k'}) \cap (\delta E)^2|
 |\nabla u (k\delta) \cdot (k\delta-k'\delta)|^2  +C_\eta \left(\frac{\delta}{\eps}\right)^2.
\end{eqnarray*}
Finally, by the periodicity of the set $E$
\begin{eqnarray*}  
|(Q_{\delta}^k \times Q_{\delta}^{k'}) \cap (\delta E)^2|&=& |Q_{\delta}^k \cap \delta E||Q_{\delta}^{k'} \cap \delta E|\\
&=& \delta^{2d} |(k+Q) \cap E| |(k'+Q) \cap E| 
= \delta^{2d} |Q\cap E|^2= \delta^{2d} |K|^2 ,
\end{eqnarray*}
which implies 
\begin{eqnarray*}
 &&   \frac{1}{\e^{d+2}}\sum_{\substack{k,k' \in Z_{\delta} \\ \delta|k-k'| < (1+\eta) \e} } |(Q_{\delta}^k \times Q_{\delta}^{k'}) \cap (\delta E)^2|
 |\nabla u (k\delta) \cdot (k\delta-k'\delta)|^2  \\
 &=&\frac{1}{\e^{d+2}}\sum_{\substack{k,k' \in Z_{\delta} \\ \delta|k-k'| < (1+\eta) \e} } |K|^2 \delta^{2d}
 |\nabla u (k\delta) \cdot (k\delta-k'\delta)|^2  \\
 &\le& |K|^2 \sum_{k \in Z_\delta} \delta ^ d \sum_{|j|< \frac{(1+\eta)\e}{\delta}} \Big(\frac{\delta}{\e}\Big)^d\Big|\nabla u (k\delta)\cdot \frac{\delta}{\e}j\Big|^
 2.
\end{eqnarray*}
It follows
\begin{align*}
    F_\e(u) \le (1+\eta)^2|K|^2 \sum_{k \in Z_\delta} \delta ^ d \sum_{\frac{\delta}{\eps}|j|< 1+\eta} \Big(\frac{\delta}{\e}\Big)^d\Big|\nabla u (k\delta)\cdot \frac{\delta}{\e}j\Big|^
 2 + C_\eta \Big(\Big(\frac{\delta}{\eps}\Big)^2+ \e^2\Big),
\end{align*}
and, thus, 
$$
\limsup_{\e\to 0} F_\e (u) \le (1+\eta)^2|K|^2\int_{\Omega' \times B_{1+2\eta}} |\nabla u (x) \cdot \xi|^2 dx d\xi.
$$
Sending $\eta \to 0$ and $\Omega ' \downarrow \Omega$ we conclude
$$
\limsup_{\e\to 0} F_\e (u) \le |K|^2\int_{\Omega \times B_1}|\nabla u (x) \cdot \xi|^2 dx d\xi = |K|^2C(d)\int_{\Omega} |\nabla u (x)|^2 dx,
$$
with $C_d = \frac{1}{d}\int_{B_1}|\xi|^2 d\xi$.

\subsection{$\Gamma$-convergence with general convolution kernel $\phi_{\eps}$}
We recall that the general form of the functional we consider is
    \begin{equation}\label{functional general phi section}
    F_{\eps}(u)=\frac{1}{\eps^{d+2}} \int_{(\Omega \cap \delta E) \times (\Omega \cap \delta E)} \phi\Big(\frac{x-y}{\eps}\Big)(u(x)-u(y))^2dxdy
    \end{equation}
and our preliminary case considered is $\phi(\xi)=\chi_{(0,1)}(|\xi|)$.    
In the case of a general $\phi$ we can proceed by approximation. As stated at the beginning of Section \ref{section: notation and statement of the results}, we consider a radial convolution kernel in $\mathbb R^d$; that is, a function $\varphi:\mathbb R^d\to [0,+\infty)$ such that a decreasing function $\phi_0:[0,+\infty)\to  [0,+\infty)$ exists satisfying $\varphi(\xi)=\phi_0(|\xi|)$. We further assume that
\begin{equation*}
\int_{\mathbb R^d} \varphi(\xi)(1+|\xi|^2)dx<+\infty,
\end{equation*}
and for each $\e>0$ we define the scaled kernel $\varphi_\e$ by $\varphi_\e(\xi)= \tfrac1{\e^d}\varphi(\tfrac\xi\e)$.
 
Since $\phi_0$ is decreasing, we can write a `staircase' approximation made of characteristic functions of nested intervals $(0,r_k)$ (where $r_k$ are defined below), that is 
\begin{equation}\label{staircase approximation}
\phi_{0,n}(t)=\sum_{k=0}^{n2^n} \chi_{(0,r_k)}(t)
\end{equation}
where for each $k=0,\dots, n2^n$
    $$
    r_k= \sup \left\{ t : \phi_0(t) \geq (k+1)2^{-n}\right\}
    $$
and $r_k=0$ if the set of such $t$ is empty. With this choice, $r_j$ is decreasing by the monotonicity of $\phi_0$. Accordingly, for each $n \in \mathbb{N}$ define the simple function
  $$
  \phi_n(\xi)= \sum_{k=0}^{n2^n} \chi_{(0,r_k)}(|\xi|).
  $$
Note that $\phi_n(\xi) \leq \phi(\xi)$ for every $\xi \in \R^d$. By standard approximation through simple functions, $\phi_n(\xi) \to \phi(\xi)$ pointwise almost everywhere, in $L^1(\R^d)$ by dominated convergence and, since $\phi$ is in particular bounded, it converges also uniformly. 

We define as $F^n_{\eps}$ the same functional (\ref{functional general phi section}) with $\phi_n$ in place of $\phi$. 
Since $\phi$ is an upper bound for any $\phi_n$, we have for any $u_\e \rightharpoonup u$ and $n \in \mathbb{N}$ fixed
    \begin{align*}
    \liminf_{\eps \to 0} F_{\eps}(u_{\eps})  \geq \liminf_{\eps \to 0} F^n_{\eps}(u_{\eps})&= \liminf_{\eps \to 0} \frac{1}{\eps^{d+2}} \int_{(\Omega \cap \delta E)^2 } \phi_n\left(\frac{x-y}{\eps}\right)(u_\eps(x)-u_\eps(y))^2dxdy \\
    &=\liminf_{\eps \to 0}  \Big( \sum_{k=0}^{n2^n} \frac{1}{\eps^{d+2}} \int_{ \substack{ (\Omega \cap \delta E) ^2 \\ \{|x-y|<r_k \eps\}}}(u_\eps(x)-u_\eps(y))^2dxdy \Big) \\
    &\geq \sum_{k=0}^{n2^n} \liminf_{\eps \to 0}  \frac{1}{\eps^{d+2}} \int_{ \substack{ (\Omega \cap \delta E)^2 \\ \{|x-y|<r_k \eps\}}}(u_\eps(x)-u_\eps(y))^2dxdy \\
    &\geq \sum_{k=0}^{n2^n}  |K|^2 \int_{\Omega}\int_{B_{r_k}} |\nabla u(x) \cdot \xi|^2 d \xi dx= \\
    &=|K|^2 \int_{\Omega} \int_{\R^d} \phi_n(\xi) |\nabla u(x) \cdot \xi|^2d \xi dx, 
    \end{align*} 
by the result of Section \ref{subsection liminf inequality}. We can pass to the limit for $n \to \infty$ by monotone convergence and we get
$$
 \liminf_{\eps \to 0} F_{\eps}(u_{\eps})  \geq |K|^2 \int_{\Omega} \int_{\R^d} \phi(\xi) |\nabla u(x) \cdot \xi|^2d \xi dx= C_\infty \int_{\Omega} |\nabla u(x)|^2dx,
$$
with $C_\infty= |K|^2 \frac{1}{d} \int_{\R^d} \phi(\xi) |\xi|^2 d \xi $. Hence, the lower bound is satisfied. 

\smallskip
As for the upper bound, we claim that it is enough to consider the function $\phi$ with compact support. Indeed, for any $R>0$ we can split the functional 
\begin{eqnarray*}
F_\e(u)&=&\frac{1}{\e^{d+2}} \int_{(\Omega \cap \delta E)^2} \phi\Big(\frac{x-y}{\eps}\Big)(u(x)-u(y))^2dxdy \\
&=&\frac{1}{\e^{d+2}} \int_{(\Omega \cap \delta E)^2} \phi\Big(\frac{x-y}{\eps}\Big)\,\chi_{B_R}\Big(\frac{x-y}{\e}\Big)(u(x)-u(y))^2dxdy  \\
&&+ \frac{1}{\e^{d+2}} \int_{(\Omega \cap \delta E)^2} \phi\Big(\frac{x-y}{\eps}\Big) \,\chi_{\R^d \setminus B_R}\Big(\frac{x-y}{\e}\Big)(u(x)-u(y))^2dxdy.
\end{eqnarray*}
In the second term of the sum we can perform the change of variables $y=x+\e \xi$, so that, setting $\Omega(\xi)= \left\{ x \in \Omega \cap \delta E: x+\eps \xi \in \Omega \cap \delta E \right\} \subset \Omega $ for each $\xi \in \R^d$, it holds
\begin{eqnarray*}
&&\hskip-2cm\frac{1}{\e^{d+2}} \int_{(\Omega \cap \delta E)^2} \phi\Big(\frac{x-y}{\eps}\Big) \chi_{\R^d \setminus B_R}\Big(\frac{x-y}{\e}\Big)(u(x)-u(y))^2dxdy 
\\ 
&=& \int_{\R^d \setminus B_R} \phi(\xi) \int_{\Omega(\xi)} \bigg( \frac{u(x+ \eps \xi)-u(x)}{\eps}\bigg)^2 dx d\xi 
\\
&\leq&  \int_{\R^d \setminus B_R} \phi(\xi)  \int_{\Omega } \left(\| \nabla u \|_{L^\infty(\Omega)} |\xi| \right)^2   dx d\xi \\
&=& |\Omega|  \| \nabla u \|_{L^\infty(\Omega)} \int_{\R^d \setminus B_R} \phi(\xi)|\xi|^2 d\xi=o(1)
\end{eqnarray*}
as $R \to \infty$, uniformly in $\e$. Thus we are left with the first term where $\phi \chi_{B_R}$ has compact support, so that the claim holds. 
For a general $\phi_0$ bounded, decreasing and with compact support, we can approximate it from above in the same way of ($\ref{staircase approximation}$), since for $\phi_{0,n}$ it holds
$$
\phi_{0,n} \leq \phi_0 \leq \phi_{0,n} +2^{-n} 
$$
for each $t \in \R$, so it it enough to take $\tilde \phi_{0,n}(t)= \phi_{0,n}(t)+2^{-n}$ for $t \in \text{supp}(\phi_0)=(0,R)$, and $0$ otherwise. Accordingly for each $n \in \mathbb{N}$ define 
$$
\tilde \phi_n (\xi)= \tilde \phi_{0,n}(|\xi|)=\sum_{k=0}^{n2^n} \chi_{(0,r_k)}(|\xi|)+ 2^{-n} \chi_{(0,R)}(|\xi|)= \sum_{k \in \mathcal{I}_n} \chi_{(0,r_k)}(|\xi|)     
$$
($\mathcal{I}_n$ is just a finite set of indices for each $n$) for which $\tilde \phi_n \geq \phi$ and it converges to $\phi$ uniformly and in $L^1(\R^d)$. Finally, for $u \in C^2(\overline{\Omega})$ and $n \in \mathbb{N}$ fixed, by the result of Section \ref{subsection limsup inequality}
\begin{align*}
    \limsup_{\eps \to 0} F_{\eps}(u)  &\leq  \limsup_{\eps \to 0} \frac{1}{\eps^{d+2}} \int_{(\Omega \cap \delta E)^2 } \tilde \phi_n\Big(\frac{x-y}{\eps}\Big)(u(x)-u(y))^2dxdy \\
    &=\limsup_{\eps \to 0}  \bigg( \sum_{k \in \mathcal{I}_n} \frac{1}{\eps^{d+2}} \int_{ \substack{ (\Omega \cap \delta E)^2 \\ \{|x-y|<r_k \eps\}}}(u(x)-u(y))^2dxdy \bigg) \\
    &\leq \sum_{k \in \mathcal{I}_n} \lim_{\eps \to 0}  \frac{1}{\eps^{d+2}} \int_{ \substack{ (\Omega \cap \delta E)^2 \\ \{|x-y|<r_k \eps\}}}(u(x)-u(y))^2dxdy \\
    &= \sum_{k \in \mathcal{I}_n}  |K|^2 \int_{\Omega}\int_{B_{r_k}} |\nabla u(x) \cdot \xi|^2 d\xi dx= \\
    &=|K|^2 \int_{\Omega} \int_{\R^d} \tilde \phi_n(\xi) |\nabla u(x) \cdot \xi|^2d \xi dx ,
    \end{align*} 
and finally pass to the limit as $n \to \infty$ to get the upper bound
$$
\limsup_{\eps \to 0} F_{\eps}(u) \leq |K|^2 \int_{\Omega} \int_{\R^d} \phi(\xi) |\nabla u(x) \cdot \xi|^2d \xi dx=C_\infty \int_{\Omega} |\nabla u(x)|^2 dx. 
$$

\noindent {\bf Acknowledgements.}
The research reported in the present contribution was carried out as part of the project ``Variational methods for stationary and evolution problems with singularities and interfaces" PRIN 2022J4FYNJ. 
The authors are members of GNAMPA of INdAM.

 \bibliographystyle{abbrv}
\bibliography{bibliography}


\end{document}